\newtheorem{theorem}{Theorem}[section]
\newtheorem{lemma}[theorem]{Lemma}
\newtheorem{proposition}[theorem]{Proposition}
\newtheorem{corollary}[theorem]{Corollary} 
\theoremstyle{definition}
\newtheorem{example}[theorem]{Example}
\newtheoremstyle{cases}
  {12pt plus 6 pt}
  {2pt}
  {\bfseries}   
  {}
  {\bfseries}
  {.}
  {.5em}
  {}
\theoremstyle{cases}
\numberwithin{subcase}{case} \numberwithin{subsubcase}{subcase}
\numberwithin{equation}{subsection}
\begin{document}

\title[Actions on $\mathrm{LO}(G)$]{Automorphisms acting on the left-orderings of a bi-orderable group \footnotetext{2000 Mathematics Subject
Classification. Primary 20F60, 06F15, 20E36}}

\author[Adam Clay and Sina Zabanfahm]{Adam Clay and Sina Zabanfahm}
\thanks{Adam Clay was partially supported by NSERC grant RGPIN-2014-05465}
\address{Department of Mathematics, 420 Machray Hall, University of Manitoba, Winnipeg, MB, R3T 2N2.}
\email{Adam.Clay@umanitoba.ca, zabanfas@umanitoba.ca}
\urladdr{http://server.math.umanitoba.ca/~claya/}

\begin{abstract}
We generalize a result of Koberda \cite{Koberda11}, by showing that the natural action of the automorphism group on the space of left-orderings is faithful for all nonabelian bi-orderable groups $G$, as well as for a certain class of left-orderable groups that includes the braid groups.   As a corollary we show that the action of $\mathrm{Aut}(G)$ on $\partial G$ is faithful whenever $G$ is bi-orderable and hyperbolic, following the approach of \cite{Koberda11}.   We also analyze the action of the commensurator of $G$ on its space of virtual left-orderings. 
\end{abstract}
\maketitle
\vspace{-.6cm}
\begin{center}
\today
\end{center}

\section{Introduction}
Let $G$ be a group.  We call a strict total ordering $<$ of the elements of $G$ a \textit{left-ordering} if $g<h$ implies $fg<fh$ for all $f, g,h \in G$.   If $G$ admits a left-ordering $<$ that is also right-invariant, in the sense that $g<h$ implies $gf<hf$ for all $f,g,h \in G$, then $<$ is a \textit{bi-ordering} of $G$. 

Each of these concepts can equivalently be defined in terms of positive cones.  That is, given a left-ordering $<$ of $G$, we can identify $<$ with its positive cone 
\[ P =\{ g \in G \mid g>1 \}
\]
which is a subset of $G$ satisfying:
\begin{enumerate}
\item $P \cdot P \subset P$
\item $P \sqcup P^{-1} \sqcup \{ 1 \} = G$.
\end{enumerate}
Conversely, given a subset $P \subset G$ satisfying $(1)$ and $(2)$, it determines a positive cone according to the prescription $g<h$ if and only if $g^{-1}h \in P$ for all $g, h \in G$.  Bi-orderings may be similarly defined in terms of positive cones, but the positive cone of any bi-ordering must also satisfy a third condition, namely $gPg^{-1} \subset P$ for all $g \in G$.

We write $\mathrm{LO}(G)$ for the set of all positive cones $P \subset G$ satisfying $(i)$ and $(ii)$ above, and, thinking of it as a subset of $2^G$ (equipped with the product topology) we endow $\mathrm{LO}(G)$ with the subspace topology.  Thus the open sets of $\mathrm{LO}(G)$ are finite intersections of sets of the form 
\[ U_g = \{ P \in \mathrm{LO}(G) \mid g \in P \} \mbox{ and } U_g^c = \{ P \in \mathrm{LO}(G) \mid g^{-1} \in P \}.
\]
We call $\mathrm{LO}(G)$ the space of left-orderings of the group $G$.  We similarly can define the space of bi-orderings of $G$, $\mathrm{BiO}(G)$, by taking all positive cones $P$ that satisfy the additional third condition of $gPg^{-1} \subset P$ for all $g \in G$.  Topologizing $\mathrm{BiO}(G)$ in the same way, we evidently have $\mathrm{BiO}(G) \subset \mathrm{LO}(G)$. Endowed with these topologies, both $\mathrm{LO}(G)$ and $\mathrm{BiO}(G)$ are compact spaces.  

There is an action of $G$ on $\mathrm{LO}(G)$ defined by $g(P) = gPg^{-1}$.  More generally, there is an action of $\mathrm{Aut}(G)$ on $\mathrm{LO}(G)$ by observing that $\phi(P)$ is again a positive cone for all $P \in \mathrm{LO}(G)$ and $\phi \in \mathrm{Aut}(G)$.  The action of $\mathrm{Aut}(G)$ on $\mathrm{LO}(G)$ is an action by homeomorphisms.   Since the positive cones which are fixed under conjugation correspond to the bi-orderings of $G$, there is also an action of $\mathrm{Out}(G)$ on $\mathrm{BiO}(G)$.

With the topological structure and group actions as above, $\mathrm{LO}(G)$ has found many applications within the study of orderable groups (for example, it was used to show that every left-orderable group has finitely many or uncountably many left-orderings \cite{linnell11}, and was used to demonstrate a connection between orderability and amenability \cite{Morris06}), though applications beyond the realm of orderability are few.   In recent work Koberda provided an example of such an application, by showing that whenever $G$ is a residually torsion-free nilpotent hyperbolic group, the natural action of $\mathrm{Aut}(G)$ on $\partial G$ is faithful \cite{Koberda11}.  This application relies on the following theorem, which was also extended in \cite{Morris12} by replacing $\mathrm{Aut}(G)$ with the commensurator of $G$:

\begin{theorem} \cite[Theorem 1.1]{Koberda11}
\label{koberdas theorem}
If $G$ is a finitely generated residually torsion-free nilpotent group, then the natural action of $\mathrm{Aut}(G)$ on $\mathrm{LO}(G)$ is faithful.
\end{theorem}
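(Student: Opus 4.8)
The plan is to argue by contraposition: assuming $\phi \in \mathrm{Aut}(G)$ fixes every point of $\mathrm{LO}(G)$, I will show $\phi = \mathrm{id}$. Throughout I would use the \emph{rational lower central series} $G = D_1 \supseteq D_2 \supseteq \cdots$, where $D_k$ is the isolator of the $k$-th term of the lower central series. Since $G$ is residually torsion-free nilpotent, $\bigcap_k D_k = \{1\}$, each $D_k$ is characteristic, each quotient $A_k := D_k/D_{k+1}$ is torsion-free abelian, and $G$ acts trivially by conjugation on $A_k$; in particular $G$ is bi-orderable, hence left-orderable.

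First I would extract what the hypothesis says on the associated graded. For each $k$ and each group order on the torsion-free abelian group $A_k$, the associated \emph{depth-based} lexicographic order (declare $g>1$ according to the sign of the image of $g$ in $A_{j}$, where $j$ is the unique index with $g \in D_{j}\setminus D_{j+1}$) is a point of $\mathrm{LO}(G)$. Because $\phi$ preserves each characteristic $D_k$, it induces $\bar\phi_k \in \mathrm{Aut}(A_k)\cong \mathrm{GL}(\mathbb{Z}^{r_k})$ and preserves depth, so $\phi(P)=P$ forces $\bar g>0 \iff \bar\phi_k(\bar g)>0$ for every group order on $A_k$ and every nonzero $\bar g$. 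Hence $\bar\phi_k(\bar g)$ lies on the same ray as $\bar g$ for all $\bar g$, so $\bar\phi_k$ is a positive scalar, and integrality forces $\bar\phi_k=\mathrm{id}$. Consequently $\phi(g)g^{-1}\in D_{k+1}$ whenever $g\in D_k$; equivalently, if $g$ has depth $i$ and $w:=g^{-1}\phi(g)\neq 1$, then $\mathrm{depth}(w)>i=\mathrm{depth}(g)$.

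The heart of the argument is to promote this graded triviality to genuine triviality, and here I expect the main obstacle. The depth-based orders above are in fact \emph{bi}-orderings, on which every inner automorphism acts trivially (a bi-ordering has conjugation-invariant positive cone); so they can never detect, say, the inner automorphisms of a Heisenberg quotient, even though such automorphisms do move $\mathrm{LO}(G)$. The difficulty is thus to manufacture genuinely one-sided left-orderings sensitive to the higher-depth discrepancy $w$. Suppose $\phi\neq\mathrm{id}$, choose $g$ with $w=g^{-1}\phi(g)\neq 1$, pick $m$ with $w\notin D_m$, and pass to the finitely generated torsion-free nilpotent quotient $N=G/D_m$, in which $n:=q(g)$ and $w':=q(w)$ are nontrivial with $\mathrm{depth}(w')>\mathrm{depth}(n)$. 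The key claim is that $N$ has a left-ordering in which $n$ and $nw'=q(\phi(g))$ have opposite signs. To build it I would use that $N$ is nilpotent, so \emph{every} subgroup is subnormal: set $M:=\sqrt{\langle n\rangle}$, the isolated infinite cyclic subgroup containing $n$. Since $n$ and $w'$ have different depths they share no common power, so $n\in M$ while $w'\notin M$. Refining a subnormal series $1 \lhd M \lhd \cdots \lhd N$ to one with infinite cyclic factors, I form the iterated ``quotient-on-top'' left-ordering, whose sign function reads off the topmost factor in which an element is nontrivial. Because $w'\notin M\ni n$, the element $w'$ becomes nontrivial strictly higher in this series than $n$ does, so $nw'$ is read off at $w'$'s higher level (where $n$ is still trivial) and $n$ at its own lower level; choosing the orientation of the factor recording $w'$ to make $w'$ negative and that recording $n$ to make $n$ positive — these are distinct factors, so the choices are independent — yields $n>1$ and $nw'<1$.

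Finally I would transport this back to $G$: choosing any left-ordering on $D_m$ and placing $N=G/D_m$ on top produces a genuine left-ordering of $G$ (for the quotient-on-top construction no conjugation-invariance of the bottom order is needed) whose sign function agrees with that of $N$ on elements with nontrivial image. As $q(g)=n$ and $q(\phi(g))=nw'$ are both nontrivial in $N$, this yields a positive cone $P$ containing exactly one of $g,\phi(g)$, so $\phi(P)\neq P$, contradicting the hypothesis. The routine points I am glossing are the standard facts that the depth-based and quotient-on-top lexicographic constructions are left-orderings, that subnormal series of torsion-free nilpotent groups refine to ones with $\mathbb{Z}$-factors, and the elementary ``same sign in every abelian order $\Rightarrow$ same ray'' statement; the substantive step is the use of the \emph{subnormal} (rather than normal) cyclic subgroup $M$, which is precisely what lets a deep element dominate a shallow one and thereby escape the bi-ordering obstruction.
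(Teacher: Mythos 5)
Your argument is correct in its essentials, but it takes a genuinely different route from the paper, which never reproves Koberda's theorem directly: there it is quoted from \cite{Koberda11} and then subsumed by Theorem \ref{main theorem}, since finitely generated residually torsion-free nilpotent groups are bi-orderable (and in the abelian case $G \cong \mathbb{Z}^n$, where the only map $\tau_{p/q}$ is the identity, so faithfulness still follows). The paper's engine is general bi-orderability machinery: Lemma \ref{new order}, asserting that the isolator $I(g)$ is relatively convex in any bi-orderable group (proved via an order-preserving action on two copies of $G$ and the relative convexity of $Z(C_G(g))$ from \cite{MR77}), the dichotomy of Proposition \ref{main lemma}, and the analysis of isomorphisms satisfying (\ref{autom}) in Lemmas \ref{Morris-lemma}--\ref{no star phis}, which shows any nontrivial such automorphism forces abelianness. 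You instead exploit the residually torsion-free nilpotent structure itself: your graded step ($\bar\phi_k = \mathrm{id}$ on each $A_k$, obtained by quantifying over all orders of $A_k$) disposes of exactly the case the paper handles via the (\ref{autom})-analysis, and your subnormal-series construction through $M = I(\langle n \rangle)$ in the nilpotent quotient is in effect a hands-on, nilpotent-case proof of Lemma \ref{new order} together with the separation step of Proposition \ref{main lemma} — reading $n$ and $nw'$ off at different levels of the series is the paper's ``convex subgroups are linearly ordered by inclusion'' argument in disguise. What each buys: the paper's method covers all bi-orderable groups and feeds directly into the $\mathrm{Comm}(G)$-on-$\mathrm{VLO}(G)$ results of Section \ref{VLOG}, whereas yours is self-contained in the finitely generated residually torsion-free nilpotent case (closer in spirit to Koberda's original proof) and avoids importing the relative-convexity theorem for centres of bi-orderable groups. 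One caveat on the step you call routine: refining a subnormal series through $M$ to one with infinite cyclic factors genuinely requires the isolation of the intermediate terms, not just subnormality — refining through a non-isolated subgroup such as $\langle n^2 \rangle \lhd \langle n \rangle$ already produces a finite factor, and torsion-free polycyclic groups in general need not be poly-$\mathbb{Z}$. In the nilpotent case the fact you need is true and classical (every isolated subgroup of a finitely generated torsion-free nilpotent group lies on a central-by-refinement chain with $\mathbb{Z}$ factors, e.g., via the normalizer condition plus the fact that normalizers of isolated subgroups are isolated, or the existence of a Mal'cev basis adapted to an isolated subgroup, as in Kargapolov--Merzlyakov), but it deserves a citation rather than the label ``routine,'' since it is precisely the point where your deep element escapes the bi-ordering obstruction.
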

 
In this paper, we characterize the action of $\mathrm{Aut}(G)$ on $\mathrm{LO}(G)$ when $G$ is a bi-orderable group.   Recall that finitely-generated residually torsion-free nilpotent groups are bi-orderable, though the converse is not true.  For example, Thompson's group $F$ is bi-orderable, but not residually nilpotent since $[F,F]$ is a simple group \cite[Section 1.2.4]{DNR14}.   

Note that for some bi-orderable groups, like $\mathbb{Q}^k$ for all $k>0$, we should not expect the action of $\mathrm{Aut}(G)$ on $\mathrm{LO}(G)$ to be faithful.  For if $G= \mathbb{Q}^k$ then  multiplication by a positive rational $p/q$ in each coordinate of  $\mathbb{Q}^k$ can easily be seen to preserve all orderings of $\mathbb{Q}^k$.   However, it turns out that these automorphisms of abelian groups are the only nontrivial automorphisms of bi-orderable groups which act trivially on the space of left-orderings.   For an abelian group $G$ and a fixed $p/q \in \mathbb{Q}$, we denote by $\tau_{p/q} : G \rightarrow G$ the automorphism satisfying $\tau_{p/q}(g^q) = g^p$ for all $g \in G$, when it exists. We prove:

\begin{theorem}
\label{main theorem}
Let $G$ be a bi-orderable group.

\noindent (i) If $G$ is nonabelian then $\mathrm{Aut}(G)$ acts faithfully on $\mathrm{LO}(G)$.

\noindent (ii) If $G$ is abelian then the kernel of the action of $\mathrm{Aut}(G)$ on $\mathrm{LO}(G)$ contains precisely the automorphisms $\tau_{p/q}$, if any such automorphisms exist.

\end{theorem}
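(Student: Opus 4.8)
The plan is to translate faithfulness into a statement about individual elements and then to determine, in a bi-orderable group, exactly which pairs of elements can never be separated by a left-ordering. Since a positive cone $P$ is fixed by $\phi$ precisely when $g\in P\iff\phi(g)\in P$, an automorphism $\phi$ lies in the kernel of the action if and only if, for every $g\in G$, the elements $g$ and $\phi(g)$ lie on the same side of every left-ordering. Write $g\approx h$ for this relation on $G\setminus\{1\}$. Two facts are immediate in any left-orderable group: $x>1\iff x^{k}>1$ for $k\ge 1$, so a relation $g^{m}=h^{n}$ with $m,n>0$ already forces $g\approx h$; and, because $G$ is bi-orderable, it is torsion-free with unique roots (from $x<y$ one gets $x^{n}<y^{n}$ in any bi-ordering, so $x^{n}=y^{n}\Rightarrow x=y$).

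The crux, and the step I expect to be the main obstacle, is the converse separation statement: if $G$ is bi-orderable and $g,h\neq 1$ satisfy $g\approx h$, then $g^{m}=h^{n}$ for some $m,n>0$, and in particular $g$ and $h$ commute. I would prove the contrapositive by manufacturing a separating left-ordering from a bi-ordering $<$, for which I may assume $g,h>1$. The handle is the chain of convex (hence normal) subgroups of $<$: if some convex subgroup contains exactly one of $g,h$, then reversing the ordering on that subgroup while retaining the quotient ordering yields a left-ordering in which $g$ and $h$ receive opposite signs. Otherwise $g$ and $h$ are Archimedean-equivalent and define nonzero elements of a single convex jump, which by H\"older's theorem is an ordered subgroup of $\mathbb{R}$; absent any relation $g^{m}=h^{n}$ their images there are $\mathbb{Q}$-independent, and a different Archimedean ordering of that jump (a different embedding into $\mathbb{R}$) reverses their relative sign. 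The delicate technical point is checking that reversing one convex subgroup, or altering a single jump, genuinely returns a left-ordering of all of $G$ rather than merely of the jump, i.e. controlling the extension data along the convex series.

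Granting the lemma, each $g\neq 1$ satisfies $g^{m}=\phi(g)^{n}$ with $m,n>0$ and $[g,\phi(g)]=1$, so $\phi(g)$ lies in the rational closure $\overline{\langle g\rangle}$ and, by unique roots, there is a well-defined $r(g)=p/q\in\mathbb{Q}_{>0}$ with $\phi(g)^{q}=g^{p}$; one checks $r(g^{k})=r(g)$. For part (ii) I work in $G\otimes\mathbb{Q}$, where $\phi(g)=r(g)\,g$. Additivity $\phi(g+h)=\phi(g)+\phi(h)$ forces $r(g)=r(h)$ on any $\mathbb{Q}$-independent pair, hence $r$ is globally constant equal to some $p/q>0$, so $\phi=\tau_{p/q}$. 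Conversely every $\tau_{p/q}$ with $p/q>0$ carries each $g$ to a positive multiple of itself and therefore lies in the kernel, which establishes (ii).

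For part (i) I must improve this to $r\equiv 1$. Working in $G^{\mathrm{ab}}\otimes\mathbb{Q}$, commensurability of $\phi(gh)=\phi(g)\phi(h)$ with $gh$ means the images $r(g)\bar g+r(h)\bar h$ and $\bar g+\bar h$ are parallel, so $r(g)=r(h)$ whenever $\bar g,\bar h$ are linearly independent. When $G^{\mathrm{ab}}\otimes\mathbb{Q}$ has rank at least two this pins $r$ to a single value $s$ on all elements of nonzero abelianized image (comparing any two against an independent third). Conjugation preserves abelianized images, so $r(hgh^{-1})=r(g)=s$ for such $g$; combining $\phi(hgh^{-1})=\phi(h)\phi(g)\phi(h)^{-1}$ with $\phi(hgh^{-1})=h\phi(g)h^{-1}$ (equal by unique roots, as both are $q$-th roots of $hg^{p}h^{-1}$) shows $h^{-1}\phi(h)$ commutes with every such $\phi(g)$, hence with a generating set, so $z_h:=h^{-1}\phi(h)\in Z(G)$. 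Then $z_h^{q}=h^{p-q}$, so $h^{p-q}\in Z(G)$ for all $h$, and because unique roots render the center isolated ($x^{k}\in Z(G)\Rightarrow x\in Z(G)$) it follows that $G$ is abelian unless $p=q$. Hence $s=1$ and $\phi=\mathrm{id}$. The remaining low-rank case, where $G^{\mathrm{ab}}\otimes\mathbb{Q}$ has rank one, I expect to be the second genuinely delicate point; there one exploits that elements frequently have trivial rational closure, forcing $r(g)$ to be a positive integer that must equal $1$ because $\phi$ is an automorphism.
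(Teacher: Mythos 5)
Your reduction of faithfulness to the pointwise relation $g\approx\phi(g)$, and your tensor-with-$\mathbb{Q}$ treatment of the abelian case, are sound and run parallel to the paper (Proposition \ref{main lemma} together with Lemma \ref{Morris-lemma}). But the two steps you yourself flag as delicate are genuinely broken, and they are precisely where the paper does its real work. In your separation lemma, the assertion that absence of a relation $g^{m}=h^{n}$ forces the images of $g,h$ in the convex jump $C/D\hookrightarrow\mathbb{R}$ to be $\mathbb{Q}$-independent is false: dependence of the images only yields $g^{m}h^{-n}\in D$, not $g^{m}h^{-n}=1$. Concretely, order $\mathbb{Z}^{2}$ lexicographically and take $g=(0,1)$, $h=(1,1)$: they lie in the same jump with \emph{equal} images, satisfy no relation, yet neither reversing the ordering on a convex subgroup nor re-embedding the jump can change their relative signs, since both signs are determined by the (equal) jump images. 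A separating left-ordering does exist, but it has a convex structure different from that of the bi-ordering you started with; producing it in general is exactly the role of the paper's key technical input, Lemma \ref{new order} (the isolator $I(g)$ is relatively convex, proved via the order-preserving action of $G$ on two ordered copies of itself and relative convexity of centers), for which your sketch offers no substitute. Your fixed-convex-series surgery only proves the lemma when the jump images happen to be independent, which is not the generic situation.

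Second, your proof of part (i) via $G^{\mathrm{ab}}\otimes\mathbb{Q}$ cannot cover all nonabelian bi-orderable groups, because such a group can be perfect: the paper's introduction itself notes that $[F,F]$, the commutator subgroup of Thompson's group $F$, is simple; it is bi-orderable (a subgroup of the bi-orderable $F$) and nonabelian, so $G^{\mathrm{ab}}\otimes\mathbb{Q}=0$ and there are no elements of nonzero abelianized image on which to run your rank-$\geq 2$ argument. Your rank-one case is moreover only a stated expectation (``I expect\dots''), not an argument, and rank zero is not even contemplated. The paper sidesteps abelianization entirely: Lemmas \ref{inequality}, \ref{funny element} and \ref{no star phis} show by a direct isolator computation --- using $I(g)\cap I(h)=\{1\}$ from Lemma \ref{Ig lemma} and the commuting-powers property of bi-orderable groups --- that any nontrivial map satisfying condition (\ref{autom}) must have abelian domain, so a nonabelian $G$ admits no such automorphism at all. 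Your derivation of $\phi=\tau_{p/q}$ in the abelian case is a clean alternative to the paper's combinatorial Lemma \ref{Morris-lemma}, but each of the two gaps above is by itself fatal to the nonabelian case as written.
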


Note that part (ii) of Theorem \ref{main theorem} already appears as \cite[Proposition 4.3(2)]{Morris12}.   We are also able to analyze the behaviour of the action of $\mathrm{Aut}(G)$ on $\mathrm{LO}(G)$ with respect to certain kinds of extensions.  

\begin{theorem}
\label{ses theorem}
Suppose that $G$ is left-orderable and that
\[ 1 \rightarrow K \rightarrow G \rightarrow \mathbb{Z} \rightarrow 1
\]
is a short exact sequence of groups.  Suppose that $\mathrm{Aut}(K)$ acts faithfully on $\mathrm{LO}(K)$.  If conjugation by the generator of $\mathbb{Z}$ preserves a left-ordering of $K$, then $\mathrm{Aut}(G)$ acts faithfully on $\mathrm{LO}(G)$.
\end{theorem}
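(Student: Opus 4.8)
The plan is to exploit the splitting of the sequence: since $\mathbb{Z}$ is free, the sequence splits, so I may write $G = K \rtimes_\alpha \mathbb{Z}$, where $\alpha \in \mathrm{Aut}(K)$ is conjugation by a fixed lift $t$ of the generator, chosen (by hypothesis) so that $\alpha$ fixes some $P_K^0 \in \mathrm{LO}(K)$. The main device is the family of ``lexicographic'' positive cones: for each $Q \in \mathrm{LO}(K)$ set
\[ \Phi(Q) = \{ g \in G \mid \pi(g) > 0\} \cup Q, \]
where $\pi : G \to \mathbb{Z}$ is the quotient map. One checks directly that $\Phi(Q) \in \mathrm{LO}(G)$, that $\Phi(Q) \cap K = Q$, and that $K$ is convex in $\Phi(Q)$ with Archimedean quotient $G/K \cong \mathbb{Z}$. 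Now suppose $\phi \in \mathrm{Aut}(G)$ lies in the kernel of the action, so $\phi(P) = P$ for every $P \in \mathrm{LO}(G)$; equivalently $\phi$ is an order-automorphism of each $(G, P)$. The goal is to prove $\phi = \mathrm{id}$.

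First I would show $\phi(K) = K$. Fixing any $\Phi(Q)$, the subgroup $\phi(K)$ is again convex and of the same (infinite) index, hence proper; since the convex subgroups of a left-ordered group form a chain and the only convex subgroup properly containing $K$ is $G$ (because $G/K \cong \mathbb{Z}$ has no nontrivial proper convex subgroups), comparability forces $\phi(K) \subseteq K$, and applying the same reasoning to $\phi^{-1}$ gives $\phi(K) = K$. Because $\phi$ then preserves the convex subgroup $K$ together with its quotient order, the induced automorphism of $G/K \cong \mathbb{Z}$ preserves the standard order and is therefore the identity; thus $\pi \circ \phi = \pi$, and $\phi(t) = m t$ for some $m \in K$.

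Next I would pin down $\phi|_K$. Since $\phi(K) = K$, restriction gives $\psi = \phi|_K \in \mathrm{Aut}(K)$. Intersecting the fixed cone $\Phi(Q)$ with $K$ and using $\phi(K) = K$ yields $\psi(Q) = \phi(\Phi(Q) \cap K) = \phi(\Phi(Q)) \cap K = \Phi(Q) \cap K = Q$ for every $Q \in \mathrm{LO}(K)$. So $\psi$ fixes every point of $\mathrm{LO}(K)$, and the faithfulness hypothesis on $\mathrm{Aut}(K)$ forces $\psi = \mathrm{id}_K$.

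The crux is to upgrade $\phi(t) = mt$ to $m = 1$, and this is exactly where the $\alpha$-invariant ordering $P_K^0$ enters. Here I would build the ``$K$-dominant'' cone
\[ P' = \{ k t^n \mid k \in P_K^0 \} \cup \{ t^n \mid n > 0 \}, \]
in which the sign of $k t^n$ is read off from $k$ first; verifying $P' \in \mathrm{LO}(G)$ uses $\alpha(P_K^0) = P_K^0$ essentially, both to establish $P' \sqcup {P'}^{-1} \sqcup \{1\} = G$ and closure under multiplication. Since $t \in P'$ and $\phi$ fixes $P'$, the element $\phi(t) = m t$ must lie in $P'$, which forces $m \in P_K^0$ whenever $m \ne 1$. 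Repeating the construction with the likewise $\alpha$-invariant cone $(P_K^0)^{-1}$ in place of $P_K^0$ forces $m^{-1} \in P_K^0$; these two conditions are incompatible, so $m = 1$. Then $\phi$ fixes $t$ and fixes $K$ pointwise, and as $G = \langle K, t\rangle$ we conclude $\phi = \mathrm{id}$. I expect this final step --- the construction of $P'$ and the extraction of the contradiction giving $m = 1$ --- to be the main obstacle, since it is the only place that genuinely invokes the hypothesis that conjugation by the generator preserves a left-ordering of $K$; the earlier reductions are essentially routine convex-subgroup bookkeeping.
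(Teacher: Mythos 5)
Your proposal is correct and follows essentially the same approach as the paper's proof: both use the $\mathbb{Z}$-dominant lexicographic cones to reduce to $\phi(K)=K$ and $\phi|_K=\mathrm{id}$ via the faithfulness hypothesis on $\mathrm{LO}(K)$, and then a $K$-dominant cone built from the $t$-invariant ordering $P_K^0$ to force $\phi(t)=t$. The only cosmetic differences are your kernel-element phrasing, your convex-subgroup chain argument for $\phi(K)=K$ in place of the paper's direct sign-choosing construction, and your use of the two opposite cones $P'$ and $P''$ where the paper instead observes that the $K$-dominant ordering is discrete with smallest positive element $t$, an invariant $\phi$ must preserve.
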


Since bi-orderability is not preserved under extensions (even under extensions such as those in the statement of the theorem above), this allows us to create non-bi-orderable groups $G$ for which $\mathrm{Aut}(G)$ acts faithfully on $\mathrm{LO}(G)$.  See also Proposition \ref{smallest element proposition}.

As a corollary of Theorem \ref{main theorem} we can extend Koberda's result concerning the action of $\mathrm{Aut}(G)$ on $\partial G$ to all bi-orderable hyperbolic groups.

\begin{corollary}
\label{hyperbolic}
If $G$ is a bi-orderable hyperbolic group, then $\mathrm{Aut}(G)$ acts faithfully on $\partial G$.
\end{corollary}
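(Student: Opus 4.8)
The plan is to follow the strategy of \cite{Koberda11}: reduce the statement to Theorem \ref{main theorem} by showing that any automorphism lying in the kernel of the action on $\partial G$ already lies in the kernel of the action on $\mathrm{LO}(G)$. Since $G$ is bi-orderable it is torsion-free, so every nontrivial element has infinite order and therefore acts loxodromically on $\partial G$, with a well-defined attracting fixed point $g^{+}$ and repelling fixed point $g^{-}$. If $G$ is elementary then, being torsion-free, it is either trivial or infinite cyclic; the case $G \cong \mathbb{Z}$ is immediate, since the nontrivial automorphism $n \mapsto -n$ interchanges the two points of $\partial\mathbb{Z}$ and there is no nontrivial order-preserving $\tau_{p/q}$ to account for. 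So I would assume $G$ is non-elementary, hence nonabelian, so that Theorem \ref{main theorem}(i) applies.

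Now suppose $\phi \in \mathrm{Aut}(G)$ acts trivially on $\partial G$. First I would recall that $\phi$, being a quasi-isometry, induces a homeomorphism $\partial\phi$ of $\partial G$ satisfying the equivariance $\partial\phi(g \cdot \xi) = \phi(g) \cdot \partial\phi(\xi)$ for all $g \in G$ and $\xi \in \partial G$. Applying this with $\partial\phi = \mathrm{id}$ and $\xi = g^{\pm}$ shows that $\phi(g)$ fixes both $g^{+}$ and $g^{-}$; since $\phi(g)$ is itself loxodromic with exactly these two fixed points, $\phi(g)$ and $g$ share the same fixed-point pair, and a short argument with the north--south dynamics shows the attracting points agree, $\phi(g)^{+} = g^{+}$. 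Here I would invoke the structure theory of torsion-free hyperbolic groups: the stabilizer of the pair $\{g^{+}, g^{-}\}$ is the maximal elementary subgroup about the axis, which in the torsion-free case is infinite cyclic, say $\langle c \rangle$ with $c^{+} = g^{+}$. Thus $g$ and $\phi(g)$ are both \emph{positive} powers of the common primitive element $c$: we may write $g = c^{a}$ and $\phi(g) = c^{b}$ with $a, b > 0$.

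It remains to feed this into the orderings. Fix any positive cone $P \in \mathrm{LO}(G)$. The restriction of the associated left-ordering to the infinite cyclic group $\langle c \rangle$ is a left-ordering of $\mathbb{Z}$, so all positive powers of $c$ lie on one side of the identity and all negative powers on the other. Since $a$ and $b$ have the same sign, $c^{a}$ and $c^{b}$ lie on the same side, that is, $g \in P \iff \phi(g) \in P$. As $g$ was arbitrary this says exactly that $\phi(P) = P$, so $\phi$ fixes every positive cone and hence acts trivially on $\mathrm{LO}(G)$. By Theorem \ref{main theorem}(i), $\phi$ is the identity, which proves that the action on $\partial G$ is faithful.

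The main obstacle is the middle step: extracting from $\partial\phi = \mathrm{id}$ the conclusion that $g$ and $\phi(g)$ are \emph{positive} powers of a single primitive element. This requires care on two points. First, that the orientation is preserved, so that the attracting fixed point maps to the attracting fixed point and not to the repelling one, which I would obtain from the equivariance together with the convergence $g^{n}\xi \to g^{+}$. Second, the input from geometric group theory that in a torsion-free hyperbolic group the maximal elementary subgroup about a loxodromic axis is infinite cyclic, so that a shared oriented fixed-point pair genuinely forces membership in a common cyclic subgroup. Once these are in hand, the passage to orderings and the appeal to Theorem \ref{main theorem} are routine.
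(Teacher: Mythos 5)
Your proof is correct, but it runs in the opposite direction from the paper's and with heavier geometric input. The paper derives Corollary \ref{hyperbolic} from Proposition \ref{Koberda prop}, which is proved in the forward direction: given a nontrivial $\phi$, faithfulness on $\mathrm{LO}(G)$ (Theorem \ref{main theorem}(i)) supplies $P$ and $g$ with $g \in P$ and $\phi(g) \notin P$; this immediately rules out $g^k = \phi(g)^{\ell}$ with $k,\ell > 0$, and the argument splits into two cases using only soft facts about boundaries: if $\phi(g) \in I(g)$ then $\phi(g)^{\ell} = g^{-k}$ and $\phi$ sends $x_g$ to $y_g$, while if $\phi(g) \notin I(g)$ then $\langle g, \phi(g)\rangle$ is not virtually cyclic and $x_{\phi(g)} \neq x_g$. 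You instead prove the contrapositive, showing that triviality on $\partial G$ forces triviality on $\mathrm{LO}(G)$, and to do so you invoke the classification of pair stabilizers as maximal elementary subgroups (infinite cyclic in the torsion-free case) to conclude that $g$ and $\phi(g)$ are positive powers of a common primitive element $c$, whence $g \in P \iff \phi(g) \in P$ uniformly in $P$. Each step of yours checks out (the orientation argument via north--south dynamics needs $|\partial G| \geq 3$, which your non-elementary reduction provides, and bi-orderability is used only through torsion-freeness and Theorem \ref{main theorem}(i)); the paper's route buys a lighter toolkit --- it needs neither boundary maps of automorphisms nor the structure of elementary subgroups --- while yours buys an explicit description of the kernel condition. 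One remark worth noting: your equivariance identity $\partial\phi(g \cdot \xi) = \phi(g)\cdot \partial\phi(\xi)$ with $\partial\phi = \mathrm{id}$ already gives $g\cdot\xi = \phi(g)\cdot\xi$ for \emph{all} $\xi \in \partial G$, so $g^{-1}\phi(g)$ lies in the kernel of the $G$-action on $\partial G$; since a nontrivial element of a torsion-free non-elementary hyperbolic group is loxodromic and fixes exactly two of the infinitely many boundary points, that kernel is trivial, and $\phi = \mathrm{id}$ follows at once. Thus your detour through $\mathrm{LO}(G)$ and Theorem \ref{main theorem} is sound but not logically necessary in your direction of argument --- which also shows that faithfulness on $\partial G$ holds for all torsion-free non-elementary hyperbolic groups, whereas the paper's point is precisely to deduce it from the ordering-theoretic result.
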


The proof of Corollary \ref{hyperbolic} is a combination of Theorem \ref{main theorem} and Proposition \ref{Koberda prop}.

The paper is organized as follows.  In Section \ref{main section} we provide additional background on left-orderings and bi-orderings of groups, and prove Theorem \ref{main theorem}.  In Section \ref{nonbo groups} we prove Theorem \ref{ses theorem} and also study the braid groups $B_n$.  In Section \ref{VLOG} show that the action of $\mathrm{Aut}(G)$ on $\partial G$ is faithful when $G$ is hyperbolic and bi-orderable, and describe the action of $\mathrm{Comm}(G)$ on $\mathrm{VLO}(G)$ for all bi-orderable groups.

\section{Automorphisms of bi-orderable groups acting on the space of orderings }
\label{main section}

By insisting that the group $G$ be bi-orderable, we allow ourselves some flexibility in creating new left-orderings of $G$.  The orderings that we will create arise from considering the action of $G$ on itself by conjugation, which is an order-preserving action if $G$ is bi-ordered (see Lemma \ref{new order}).  With this line of reasoning we will create sufficiently many left-orderings to show that whenever $\phi \in \mathrm{Aut}(G)$ and $\phi(g) \neq g$ for some $g \in G$, then there exists $P \in \mathrm{LO}(G)$ that contains $g$ but not $\phi(g)$.  It follows that the action of $\phi$ on $\mathrm{LO}(G)$ is nontrivial, because the positive cone $P$ satisfies $\phi(P) \neq P$.

Recall that a subset $S \subset G$ is called \textit{isolated} if $g^k \in S$ for some $k \in \mathbb{Z}$ implies that $g \in S$.  The \textit{isolator} of a subgroup $H$ of $G$ is the set 
\[ I(H) = \{ g \in G  \mid \mbox{ there exists $k \in \mathbb{Z}$ such that $g^k \in H$} \}.
\]
In general, $I(H)$ is not a subgroup.  However, when $H$ is abelian and $G$ is bi-orderable, then $I(H)$ is an abelian subgroup.  Essential in proving this fact is the following property of bi-orderable groups:  In a bi-orderable group, when $g^k$ and $h^{\ell}$ commute for some $k, \ell \in \mathbb{Z}$, then so do $g$ and $h$.  This fact will also be used several times in the proofs of this section.  

When $H$ is a rank one abelian subgroup of $G$, so is $I(H)$.  If $g$ is a nonidentity element of a bi-orderable group $G$, then we will denote the isolator of the cyclic subgroup $\langle g \rangle$ by $I(g)$ for short. Thus $I(g)$ is always a rank one abelian group.  We record the following fact for future use:

\begin{lemma}
\label{Ig lemma}
Let $G$ be a group. If $g, h$ are distinct elements of $G$, then either $I(g) = I(h)$ or $I(g) \cap I(h) = \{ 1 \}$.
\end{lemma}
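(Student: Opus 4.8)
The plan is to reduce the stated dichotomy to a single local description of the isolator: I claim that for every nonidentity $x \in I(g)$ one has $I(x) = I(g)$. Granting this, the lemma follows at once. If $I(g) \cap I(h) \neq \{1\}$, I would pick a nonidentity element $x$ in the intersection; then $x \in I(g)$ gives $I(x) = I(g)$ and $x \in I(h)$ gives $I(x) = I(h)$, so $I(g) = I(h)$. Thus essentially everything reduces to proving the claim, which I would do by establishing the two inclusions separately, routing every relation through the cyclic group $\langle g \rangle$ so that no commutativity between distinct elements is ever invoked.

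For the inclusion $I(x) \subseteq I(g)$: given $z \in I(x)$, there is an integer $m \neq 0$ with $z^m = x^j$ for some $j$, and an integer $n \neq 0$ with $x^n \in \langle g \rangle$. Raising the first relation to the $n$-th power yields $z^{mn} = x^{jn} = (x^n)^j \in \langle g \rangle$, since $\langle g \rangle$ is closed under taking powers; as $mn \neq 0$ this gives $z \in I(g)$. This direction needs nothing beyond the group axioms.

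For the reverse inclusion $I(g) \subseteq I(x)$: given $z \in I(g)$, write $z^m = g^q$ with $m \neq 0$, and write $x^n = g^p$ with $n \neq 0$ (possible since $x \in I(g)$). The one point requiring care---the only place a hypothesis beyond the group axioms is used---is that $p \neq 0$. This holds because $x \neq 1$ and $G$, being left-orderable, is torsion-free, so $x^n \neq 1$ and hence $g^p = x^n \neq 1$. Then a computation carried out entirely inside $\langle g \rangle$ gives $z^{mp} = g^{qp} = g^{pq} = x^{nq}$, so $z^{mp} \in \langle x \rangle$ with $mp \neq 0$, whence $z \in I(x)$.

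The substance of the argument is small, and I expect the only genuine obstacle to be the non-vanishing of $p$, which is precisely where torsion-freeness is indispensable. Indeed the statement fails without it: in $\mathbb{Z} \times \mathbb{Z}/2\mathbb{Z}$ one has $I((0,1)) = \{0\} \times \mathbb{Z}/2\mathbb{Z}$ properly contained in $I((1,0)) = \mathbb{Z} \times \mathbb{Z}/2\mathbb{Z}$, with nontrivial intersection. Since every group considered here is bi-orderable, hence torsion-free, this hypothesis is always available and the proof goes through.
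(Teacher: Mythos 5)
Your proof is correct and in substance the same as the paper's: both arguments push powers into the cyclic subgroups by multiplying exponents and conclude via isolatedness, your claim that $I(x)=I(g)$ for every nonidentity $x\in I(g)$ being a cleaner packaging of the paper's terse chain $f^n=g^m$, hence $g^m\in I(h)$, hence $g\in I(h)$, hence $I(g)=I(h)$. Your one genuine addition is making torsion-freeness explicit (to force $p\neq 0$): the lemma is stated for an arbitrary group, but the paper's own proof tacitly needs the same hypothesis (without $m\neq 0$ the appeal to isolatedness fails), and your $\mathbb{Z}\times\mathbb{Z}/2\mathbb{Z}$ example correctly shows the statement is false with torsion --- harmless here, since the lemma is only ever applied to bi-orderable, hence torsion-free, groups.
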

\begin{proof}
Suppose there exists $f \in I(h) \cap I(g)$ where $f\not = 1$. Since $f\in I(g)$, there exist $n,m \in \mathbb{Z}$ such that $f^n=g^m$. But now $g^n \in I(h)$ and since $I(h)$ is isolated, $g$ is also in $I(h)$ and $I(h)=I(g)$.
\end{proof}

Recall that a subset $S$ in a left-ordered group $G$ is called \textit{convex} with respect to a given left-ordering $<$ if  $g, h \in S$ and $g<f<h$ implies $f \in S$.   Of particular importance is the case when a subgroup $C$ of a left-ordered group $G$ is convex, as the convex subgroups of a left-ordering determine its structure in a sense described below.  The convex subgroups of a left-ordered group $G$ are ordered by inclusion.  A subgroup is \textit{relatively convex} if there exists a left-ordering relative to which it is convex.  

Given a subgroup $C$ of a left-ordered group $G$, the natural quotient ordering of the left cosets $G/C$ is well-defined if and only if $C$ is convex, in this case the natural left-action of $G/C$ preserves the quotient ordering.  Therefore we can think of the ordering of $G$ as lexicographic: it is constructed via inclusion of the left-ordered subgroup $C$ and via pullback of the natural ordering on the cosets $G/C$.

Consequently, if $C$ is a convex subgroup of a left-ordered group, then the left-ordering of $G$ may be altered by replacing the left-ordering of $C$ with any left-ordering that we please.  It follows that relative convexity is transitive, in the sense that if $K$ is relatively convex in $H$, and $H$ is relatively convex in $G$, then $K$ is relatively convex in $G$.  This fact is needed in the proof of the following lemma.

\begin{lemma} \cite[Lemma 2.4]{Clay12}
\label{new order}
Suppose that $G$ is a bi-orderable group, and that $g \in G$ is not the identity.  Then $I(g)$ is relatively convex.
\end{lemma}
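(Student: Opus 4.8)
The plan is to exhibit an explicit left-ordering of $G$ in which $I(g)$ is convex, and I would reduce this to orienting the left cosets $G/I(g)$. Recall (from the discussion of lexicographic orderings preceding the lemma) that a $G$-invariant total order $\prec$ on $G/I(g)$, together with any left-ordering of the subgroup $I(g)$, produces a left-ordering of $G$ via the lexicographic recipe: declare $x$ positive if $xI(g) \succ I(g)$, or if $x \in I(g)$ and $x$ is positive in the chosen ordering of $I(g)$. In such an ordering $I(g)$ is convex (indeed it is the smallest nontrivial convex subgroup), and invariance of $\prec$ is exactly what makes the resulting positive set closed under multiplication. Since $I(g)$ is left-orderable, being a subgroup of the left-orderable $G$, the entire problem collapses to building a left-multiplication-invariant ordering of the coset space $G/I(g)$.

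To construct that coset ordering I would lean on a fixed bi-ordering $<$ of $G$ and on the special structure of $I(g)$. Let $G^g$ be the intersection of all $<$-convex subgroups containing $g$; this is again $<$-convex, it is $<$-convex in $G$, and it contains $I(g)$ because convex subgroups are isolated. Since the ordering inside a convex subgroup may be replaced at will and relative convexity is transitive, it suffices to make $I(g)$ convex \emph{inside $G^g$}. Here the bi-ordering helps: because $I(g)$ is a rank-one abelian group it is Archimedean under $<$, so all its nonidentity elements occupy a single Archimedean class, while by Lemma~\ref{Ig lemma} any element outside $I(g)$ generates an isolator meeting $I(g)$ trivially. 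I would use $<$ to assign a sign to each coset, taking the sign directly from $<$ for cosets represented by elements lying above or below all of $G^g$ (the sign is constant on such a coset precisely because $G^g$ is convex and contains $I(g)$), and then extend the assignment across the interior cosets. The commuting property of bi-orderable groups---that $g^k$ and $h^\ell$ commuting forces $g$ and $h$ to commute---together with Lemma~\ref{Ig lemma} is what I expect to guarantee that distinct cosets are never entangled, so that the sign is well defined on cosets and multiplicative, hence $G$-invariant.

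The delicate point, and the step I expect to be the main obstacle, is the treatment of elements that are \emph{infinitesimally small relative to $g$} in the bi-ordering: such an element $h$ satisfies $|h|^n < |g|$ for all $n$, so it lies below $g$ and hence inside $G^g$, yet it meets $I(g)$ trivially and therefore must be pushed \emph{above} all of $I(g)$ in the new ordering, since $I(g)$ is to be the minimal convex subgroup. Re-orienting these infinitesimal elements while preserving the positive-cone axioms is exactly where rank-one-ness, isolation, and Lemma~\ref{Ig lemma} have to be combined. I would handle it by a maximality (Zorn's lemma) argument on partial coset-cones that contain $I(g)$ and never trap a nonidentity element of $I(g)$ together with its inverse, arguing that bi-orderability supplies the consistency needed at each extension step: any obstruction would force a power relation among elements of distinct isolators, contradicting the commuting property or Lemma~\ref{Ig lemma}. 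Once the invariant coset ordering is obtained, $I(g)$ is convex in $G^g$, and transitivity of relative convexity upgrades this to convexity of $I(g)$ in all of $G$, completing the proof.
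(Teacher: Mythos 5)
Your reduction in the first paragraph is fine: $I(g)$ is relatively convex if and only if the coset space $G/I(g)$ admits a $G$-invariant total order, and lexicographic assembly then gives the required left-ordering of $G$. The gap is everything after that. The Zorn's lemma step on ``partial coset-cones'' is not a proof sketch with a missing detail --- it \emph{is} the lemma, and the consistency claim you make for the extension step is unsubstantiated. An obstruction to extending a partial invariant order on $G/I(g)$ does not take the form of a power relation between elements of distinct isolators; it takes the form of a product of conjugates, e.g.\ a relation $f_1 h^{\epsilon_1} f_1^{-1} f_2 h^{\epsilon_2} f_2^{-1} \cdots f_k h^{\epsilon_k} f_k^{-1} \in I(g)$ with mixed data, because invariance forces the sign of $hI(g)$ to propagate to all translates $fhI(g)$ simultaneously. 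Neither Lemma~\ref{Ig lemma} nor the commuting-powers property of bi-orderable groups rules such relations out, so the asserted dichotomy (``any obstruction would force a power relation\dots'') does not hold as stated. Relatedly, your sign assignment is only defined on cosets lying entirely above or below $G^g$; the ``interior cosets'' --- represented by elements infinitesimal relative to $g$, or Archimedean-equivalent to $g$ but outside $I(g)$ (note these exist even in $\mathbb{Z}^2$, so the rank-one/Archimedean observation does not make them go away) --- are exactly the hard case, and for non-commuting such elements you give no construction at all.

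The paper's proof resolves precisely this difficulty by a different mechanism, and it is worth seeing why. It builds an order-preserving action of $G$ on two bi-ordered copies of itself, acting by \emph{conjugation} on the first copy and left multiplication on the second; the standard orbit construction with $g$ made smallest then yields a left-ordering of $G$ in which the stabilizer $\mathrm{Stab}_G(g)=C_G(g)$ is convex. This one stroke eliminates every non-commuting element from consideration: after it, the problem lives inside the bi-orderable group $C_G(g)$, where the centre $Z(C_G(g))$ is relatively convex by \cite[Theorem 2.4]{MR77}, $I(g)\subset Z(C_G(g))$ by the commuting-powers property, and an isolated subgroup of an abelian group is relatively convex by elementary means; transitivity of relative convexity then finishes. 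In other words, the commuting-powers fact and Lemma~\ref{Ig lemma} are used where they actually apply --- inside abelian or central pieces --- rather than being asked to control arbitrary conjugation relations in $G$, which is what your Zorn argument would need. To repair your approach you would have to either prove a relative-convexity criterion for $I(g)$ handling products of conjugates directly, or import the centralizer-convexity step, at which point you have reproduced the paper's proof.
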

\begin{proof}  
Let $G_i$, $i=1,2$ denote two copies of the group $G$, and equip each copy with a given bi-ordering $<$.  Create a total ordering of $G_1 \cup G_2$ using $<$ to order each $G_i$, and declare the elements of $G_1$ smaller than those of $G_2$. 

Now consider the action of $G$ on $G_1 \cup G_2$ defined by conjugation on the elements of $G_1$, and by left-multiplication on the elements of $G_2$.  This defines an effective, order-preserving action of $G$ on the totally ordered set $G_1 \cup G_2$.  Fix a nonidentity element $g \in G_1$ and well-order $G_1 \cup G_2$ so that $g$ is smallest.  Then using the action of $G$ on $G_1 \cup G_2$ one may create a left-ordering of $G$ in the standard way, relative to which $Stab_G(g) = C_G(g)$ is convex.  Here, $C_G(g)$ denotes the centralizer of $g$ in $G$ (See \cite[Proposition 2.3]{Clay12} or \cite[Example 1.11 and Problem 2.16]{CR15} for details of this construction).  Now as $C_G(g)$ is bi-orderable, the centre $Z(C_G(g))$ is relatively convex in $C_G(g)$ by \cite[Theorem 2.4]{MR77}. Moreover, $I(g) \subset Z(C_G(g))$ since every element of $I(g)$ has some power which lies in $\langle g \rangle$, and thus commutes with all elements of $C_G(g)$.  Since $I(g)$ is an isolated subgroup and $Z(C_G(g))$ is abelian, $I(g)$ is relatively convex in $Z(C_G(g))$.  Thus $I(g)$ is relatively convex in $G$.
\end{proof}

\begin{proposition}
\label{main lemma}
Suppose that $G$ is a bi-orderable group, and that $\phi \in \mathrm{Aut}(G)$.  If there exists $g \in G$ such that $\phi(I(g)) \neq I(g)$, or if there exists $g \in G$ such that $\phi(g)^n = g^{-m}$ for some $m, n >0$, then the action of $\phi$ on $\mathrm{LO}(G)$ is nontrivial.
\end{proposition}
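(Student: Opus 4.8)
The plan is to show that under either hypothesis there is a positive cone $P \in \mathrm{LO}(G)$ with $\phi(P) \neq P$, which is precisely the assertion that $\phi$ acts nontrivially. The organizing observation is that $\phi$ fixes a positive cone $P$ if and only if $\phi$ is an order-automorphism of $(G,<_P)$: indeed $\phi(P)=P$ means $h \in P \iff \phi(h) \in P$ for all $h$, and since $a <_P b \iff \phi(a) <_P \phi(b)$ this says exactly that $\phi$ preserves the ordering. I will also use throughout that $\phi(I(g)) = I(\phi(g))$ for every $g$, which follows immediately from the definition of the isolator together with the fact that $\phi$ is an automorphism. Accordingly I handle the two hypotheses separately.

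For the first hypothesis, suppose $\phi(I(g)) \neq I(g)$ for some $g$; this forces $g \neq 1$, so $I(g)$ and $I(\phi(g))$ are distinct nontrivial rank one subgroups, and by Lemma~\ref{Ig lemma} they intersect trivially. By Lemma~\ref{new order} I choose $P \in \mathrm{LO}(G)$ making $I(g)$ convex. The key step is: if $\phi(P)=P$, then $\phi$ is an order-automorphism of $(G,<_P)$ and therefore sends convex subgroups to convex subgroups, so $\phi(I(g)) = I(\phi(g))$ is also convex with respect to $P$. But the convex subgroups of a left-ordering form a chain under inclusion, so one of $I(g), I(\phi(g))$ would contain the other — impossible for two nontrivial subgroups meeting only in $\{1\}$. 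Hence $\phi(P) \neq P$.

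For the second hypothesis, suppose $\phi(g)^n = g^{-m}$ with $m,n>0$ and $g \neq 1$. Then $\phi(g)^n \in \langle g \rangle$, so $\phi(g)$ lies in the isolated (and convex-admitting, rank one abelian) subgroup $I(g)$, alongside $g$. Using Lemma~\ref{new order} I fix $P$ with $I(g)$ convex, and then, since a rank one torsion-free abelian group admits exactly two orderings, I replace the ordering on the convex subgroup $I(g)$ by whichever one makes $g$ positive; convexity guarantees that the sign of an element of $I(g)$ in $G$ agrees with its sign inside $I(g)$. Writing $I(g)$ additively as a subgroup of $\mathbb{Q}$, the relation $n\,\phi(g) = -m\,g$ shows that $\phi(g)$ and $g$ have opposite signs, so $g \in P$ while $\phi(g) \notin P$. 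Since $g \in P$ but $\phi(g) \notin P$, we again obtain $\phi(P) \neq P$.

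I expect the delicate point to be the convex-subgroup argument in the first case: one must verify that an order-automorphism genuinely carries convex subgroups to convex subgroups, and that the two resulting convex subgroups are truly incomparable because their intersection is trivial — this is exactly where Lemma~\ref{Ig lemma} and the chain structure of convex subgroups do the work. The second case is essentially a direct computation once $I(g)$ is known to be convex and rank one; the only thing to check carefully is that the freedom to re-order a convex subgroup lets us place $g$ positive while forcing $\phi(g)$, which lies in the same convex subgroup, to be negative.
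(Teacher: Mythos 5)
Your proof is correct, and for the isolator case it is essentially the paper's argument: choose $P$ via Lemma~\ref{new order} so that $I(g)$ is convex, note $\phi(I(g)) = I(\phi(g))$, and derive a contradiction from the chain property of convex subgroups together with Lemma~\ref{Ig lemma}. Your contrapositive framing (if $\phi(P)=P$ then $\phi$ is an order-automorphism of $(G,<_P)$ and so carries convex subgroups to convex subgroups) is the same computation the paper performs by observing that $I(\phi(g))$ is convex with respect to $\phi(P)$; the two formulations are interchangeable. Where you genuinely diverge is the case $\phi(g)^n = g^{-m}$: the paper dispatches it in two lines by taking an \emph{arbitrary} positive cone with $g \in P$ (replacing $P$ by $P^{-1}$ if necessary) and noting that $\phi(g) \in P$ would give $g^{-m} = \phi(g)^n \in P$, contradicting $g^m \in P$ --- no convexity, no isolators. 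Your route through Lemma~\ref{new order}, the fact that a rank one torsion-free abelian group has exactly two orderings, and re-ordering the convex subgroup $I(g)$ is valid but over-engineered: the sign computation you carry out inside $I(g)$ works verbatim in any left-ordering in which $g$ is positive, since positive cones are closed under powers. The paper's version also yields slightly more: in this case \emph{every} positive cone containing $g$ is moved by $\phi$, not just a specially constructed one. Finally, both your argument and the paper's implicitly assume $g \neq 1$ (for $g=1$ the relation $\phi(g)^n = g^{-m}$ holds vacuously for any $\phi$), so your explicit insertion of that hypothesis is harmless and in the intended spirit of the statement.
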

\begin{proof} 
Suppose there exists $g \in G$ such that $\phi(g)^n=g^{-m}$ for some $m,n > 0$. Consider an arbitrary positive cone $P \in  LO(G)$. We can assume $g \in P$, if not we replace $P$ by $P^{-1}$. Then $g\in P$ and $\phi(g) \not \in P$, so we have $P \not = \phi(P)$.

Now suppose there exists $g$ such that $I(g)\not=\phi(I(g)) $, and note that $\phi(I(g)) = I(\phi(g))$. By Lemma \ref{new order} $I(g)$ is convex in some left-ordering of $G$ with positive cone $P \in LO(G)$.  Applying $\phi$, one checks that $\phi(I(g)) = I(\phi(g))$ is convex relative to the ordering of $G$ determined by $\phi(P)$.  

To show that $\phi(P) \neq P$, we need only show that $I(g)$ is not convex relative to the ordering of $G$ determined by $\phi(P)$.  If it were, we would have either $I(g) \subset I(\phi(g))$ or $I(\phi(g)) \subset I(g)$, since convex subgroups are ordered by inclusion. By Lemma \ref{Ig lemma}, either inclusion forces $I(g) = I(\phi(g)) = \phi(I(g))$, a contradiction.
\end{proof}

Therefore, by Proposition \ref{main lemma}, when $G$ is a bi-orderable group and $\phi \in \mathrm{Aut}(G)$ we know that $\phi$ acts nontrivially on $\mathrm{LO}(G)$ unless $\phi$ satisfies:
\[ \label{autom} \tag{*} \forall g \in \mathrm{domain}(\phi) \ \exists n,m >0 \mbox{ such that } \phi(g)^n = g^m.
\]
We therefore investigate the existence of such automorphisms of bi-orderable groups.  

Our lemmas below are stated in a slightly more general setting than needed in this section, as we will also be using them in our investigation of the action of $\mathrm{Comm}(G)$ on $\mathrm{VLO}(G)$ in Section \ref{VLOG}.

Recall that when $G$ is abelian, we denote by $\tau_{p/q} : G \rightarrow G$ the automorphism satisfying $\tau_{p/q}(g^q) = g^p$ for all $g \in G$, when it exists.  More generally, if $H_1, H_2$ are finite index abelian subgroups of a group $G$, we denote by $\tau_{p/q} : H_1 \rightarrow H_2$ the isomorphism satisfying $\tau_{p/q}(g^q) = g^p$ for all $g \in H_1$, when it exists.

\begin{lemma}
\label{Morris-lemma}
Suppose $G$ is a bi-orderable group with finite index torsion-free abelian subgroups $H_1, H_2$, and $\phi:H_1 \rightarrow H_2$ is an isomorphism satisfying (\ref{autom}).  Then there exist $p,q >0$ such that $\phi(g)^q = g^p$ for all $g \in H_1$, so that $\phi = \tau_{p/q}$.
\end{lemma}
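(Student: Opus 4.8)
The plan is to upgrade the pointwise relations supplied by condition~(\ref{autom}) into a single global relation, by showing that the ratio they determine is independent of the point. For each nonidentity $g \in H_1$, condition~(\ref{autom}) gives integers $n_g, m_g > 0$ with $\phi(g)^{n_g} = g^{m_g}$; I aim to show that the positive rational $m_g/n_g$ is the same for every $g$, for then taking $p/q$ to be this common value (in lowest terms, with $p,q>0$) yields $\phi(g)^q = g^p$ for all $g \in H_1$, which is precisely the statement $\phi = \tau_{p/q}$.

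First I would manufacture a common $\mathbb{Q}$-vector space in which to argue. Fix nonidentity $g,h \in H_1$. As $H_1$ is abelian, $g$ and $h$ commute, and as $H_2$ is abelian, $\phi(g)$ and $\phi(h)$ commute. The crucial observation is that $\phi(g)$ commutes with both $g$ and $h$: the element $\phi(g)^{n_g} = g^{m_g}$ is a power of $g$, so it commutes with $g$ and with $h$, and the property of bi-orderable groups recalled above --- that $a^k$ and $b^\ell$ commuting forces $a$ and $b$ to commute --- then gives that $\phi(g)$ commutes with $g$ and with $h$; symmetrically $\phi(h)$ commutes with $g$ and $h$. Hence $g,h,\phi(g),\phi(h)$ pairwise commute and generate a finitely generated torsion-free abelian subgroup $A \leq G$. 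Its divisible hull $W = A \otimes \mathbb{Q}$ is a finite-dimensional $\mathbb{Q}$-vector space, and writing $A$ additively inside $W$ the relation $\phi(g)^{n_g} = g^{m_g}$ reads $n_g\,\phi(g) = m_g\,g$, so $\phi(g) = r_g\,g$ with $r_g = m_g/n_g$; because $W$ is torsion-free, $r_g$ is well-defined independently of the chosen $n_g,m_g$, and likewise $\phi(h) = r_h\,h$.

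With the problem linearized, I would compare $r_g$ and $r_h$ using that $\phi$ is a homomorphism. If $g$ and $h$ are linearly independent in $W$, then applying~(\ref{autom}) to $gh \neq 1$ gives $\phi(gh) = r_{gh}(g+h)$ in $W$, whereas $\phi(gh) = \phi(g) + \phi(h) = r_g\,g + r_h\,h$; comparing coefficients forces $r_g = r_{gh} = r_h$. If instead $g$ and $h$ are dependent, write $g^a = h^b$ for nonzero integers $a,b$, so that $a\,g = b\,h$ in $W$; applying $\phi$ and reading additively gives $a\,r_g\,g = a\,\phi(g) = b\,\phi(h) = b\,r_h\,h = a\,r_h\,g$, again forcing $r_g = r_h$. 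In either case $r_g = r_h$, so $r := r_g$ is a single positive rational, and writing $r = p/q$ in lowest terms delivers $\phi(g)^q = g^p$ for all $g \in H_1$.

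The one genuinely load-bearing step is the commutativity claim of the second paragraph. Without it the elements $g, \phi(g), h, \phi(h)$ need not lie in any common abelian group, and the ratios $r_g$ would have no ambient vector space in which to be compared; it is exactly here that bi-orderability enters, used solely through the implication that commuting powers come from commuting elements. Once that implication converts the a priori unrelated local relations $\phi(g)^{n_g} = g^{m_g}$ into commuting linear data, the remainder is elementary linear algebra over $\mathbb{Q}$.
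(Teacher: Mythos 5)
Your proof is correct, and it runs on a genuinely different engine than the paper's. The paper never leaves the group: it applies (\ref{autom}) to the composite element $g^m h^{\ell}$, obtains $(g^m h^{\ell})^s = (g^n h^k)^t$ with $\gcd(s,t)=1$, pushes the resulting relation $g^{ms-nt} = h^{tk-s\ell}$ into $I(g) \cap I(h) = \{1\}$ via Lemma \ref{Ig lemma}, and finishes with coprimality arithmetic ($ms=nt$, $tk=s\ell$ force $m=t=\ell$, $n=s=k$). Your case split --- $g,h$ linearly dependent versus independent in $W$ --- is exactly the paper's dichotomy $I(g)=I(h)$ versus $I(g) \cap I(h) = \{1\}$, and your test element $gh$ plays the role of the paper's $g^m h^{\ell}$; but where the paper does gcd bookkeeping you do coefficient comparison in a $\mathbb{Q}$-vector space. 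Your linearization buys three things: the coprimality arithmetic disappears; the structural content becomes transparent ($\phi$ is scalar multiplication by $r$ on $H_1 \otimes \mathbb{Q}$); and you never use the finite-index hypothesis, which the paper invokes (only, and arguably inessentially) in its $I(g)=I(h)$ case --- so your argument proves the lemma for arbitrary abelian subgroups $H_1, H_2$ of a bi-orderable $G$. The cost is your load-bearing commutativity step: to build the ambient abelian group $A \ni g, h, \phi(g), \phi(h)$ you must invoke the commuting-powers property of bi-orderable groups, a step the paper sidesteps entirely because its computation immediately trades $\phi(g)^m \phi(h)^{\ell}$ for $g^n h^k \in H_1$ via the given relations and thereafter works inside the abelian group $H_1$. (Both proofs do lean on bi-orderability elsewhere: yours through torsion-freeness to embed $A$ in its divisible hull and to get nonzero $a,b$ with $g^a = h^b$ in the dependent case, the paper's through uniqueness of roots to normalize exponents to lowest terms.) Your steps all check: $\phi(g)^{n_g} = g^{m_g}$ commutes with $g$ and with $h$, so $\phi(g)$ does too; $r_g$ is well-defined because $W$ is a vector space and $g \neq 0$ in $W$; and $q\phi(g) = pg$ in $W$ descends to $\phi(g)^q = g^p$ in $G$ since $A$ injects into $W$.
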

\begin{proof} This lemma is essentially Case 2 of the proof of \cite[Proposition 4.3]{Morris12}.  Here is an alternative proof.  Assume $\phi:H_1 \rightarrow H_2$ satisfies (\ref{autom}) and that $H_1$ is torsion free abelian.  Let $g,h \in H_1$ and suppose $\phi(g)^m = g^n$ and $\phi(h)^{\ell} = h^k$ for some $k,\ell,m,n >0$.   By uniqueness of roots, we may assume that $\gcd(m,n)=\gcd(k, \ell) =1$, we wish to show that $m=\ell$ and $n=k$.  If $I(g) = I(h)$ then the result follows by applying $\phi$ to a common power of $g$ and $h$ which lies in $H_1$, such a common power exists since $|G:H_1|$ is finite.  So suppose $I(g) \neq I(h)$, and therefore $I(g) \cap I(h) = \{ 1 \}$ by Lemma \ref{Ig lemma}. 

Considering $g^m h^{\ell}$, we see that $\phi(g^m h^{\ell}) = g^nh^k \in I(g^m h^{\ell})$, so there exist relatively prime $s, t >0$ such that $(g^m h^{\ell})^s = (g^nh^k)^t$.  Since $H_1$ is abelian $g^{ms-nt} = h^{tk-s\ell}$, and since both are in $I(g) \cap I(h)$, both are equal to $1$.  Since $\gcd(m,n)= \gcd(s,t)=1$, from $ms-nt=0$ we find $m=t$ and $s=n$.  Similarly from $tk-s\ell$ we find $t= \ell$ and $k=s$, so we are done.
\end{proof}

\begin{lemma} 
\label{inequality}
Suppose $G$ is a bi-orderable group with finite index subgroups $H_1, H_2$, that $\phi:H_1 \rightarrow H_2$ is an isomorphism satisfying (\ref{autom}), and that $\phi$ is not the identity.  Then for every $g \in H_1$ there exist $p, q >0$ such that $\phi(g)^q = g^p$ where $p \neq q$.
\end{lemma}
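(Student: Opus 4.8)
The plan is to reformulate the conclusion as a statement about fixed points. For each $g \in H_1$, condition (\ref{autom}) supplies $n,m>0$ with $\phi(g)^n = g^m$, and cancelling a common factor (roots are unique in the torsion-free group $G$) we may take the exponents coprime; call the resulting pair $(p_g,q_g)$, so $\phi(g)^{q_g}=g^{p_g}$ with $\gcd(p_g,q_g)=1$. Since $I(g)$ is isolated and $\phi(g)^{q_g}=g^{p_g}\in I(g)$, we have $\phi(g)\in I(g)$, and the coprime pair is unique because $I(g)$ has rank one. As $G$ is torsion free, $p_g=q_g$ forces $p_g=q_g=1$, i.e.\ $\phi(g)=g$; conversely if $\phi(g)=g$ then every relation $\phi(g)^q=g^p$ has $p=q$. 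Hence the assertion ``for every $g$ there exist $p\neq q$'' is \emph{exactly} the statement that $\phi$ fixes no nonidentity element. Writing $r(g)=p_g/q_g$, I must therefore show that if $\phi\neq\mathrm{id}$ then $r(g)\neq 1$ for all $g\neq 1$, and I will do this by proving that $r$ is constant, say $r\equiv c$: then $c\neq 1$ (otherwise $\phi=\mathrm{id}$), and the lemma follows.

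The first step is the commuting case. Suppose $g,h\in H_1$ are nonidentity and commute. Since $\phi(g)\in I(g)$ and $\phi(h)\in I(h)$, and since in a bi-orderable group commuting of powers implies commuting of the elements, the four elements $g,h,\phi(g),\phi(h)$ generate a torsion-free abelian group $A$; passing to the $\mathbb{Q}$-vector space $A\otimes\mathbb{Q}$ we get $\phi(g)=r(g)\,g$ and $\phi(h)=r(h)\,h$. Applying (\ref{autom}) to the product $gh$ and comparing coordinates yields $r(g)=r(gh)=r(h)$ when $I(g)\neq I(h)$, so that $g,h$ are independent by Lemma \ref{Ig lemma}, while the same conclusion follows at once from a common power when $I(g)=I(h)$. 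Thus $r$ is constant on each centralizer: if $z\neq 1$ is fixed then every element commuting with $z$ is fixed too. I also record a consequence of (\ref{autom}) for the globalization: fixing a bi-ordering $\prec$, the relation $\phi(g)^{q_g}=g^{p_g}$ shows $g\succ 1\Rightarrow\phi(g)\succ 1$, so $\phi$ carries the positive cone of every bi-ordering into itself.

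The remaining, and hardest, step is to upgrade constancy of $r$ from commuting pairs to all of $H_1$. The natural device is conjugation: for $x\in H_1$ one computes $\phi(xgx^{-1})^{q_g}=(\phi(x)\,g\,\phi(x)^{-1})^{p_g}$, and since the left side lies in $I(xgx^{-1})$ one deduces $wgw^{-1}\in I(g)$ for $w=x^{-1}\phi(x)$, where moreover $w\in I(x)$ because $\phi(x)\in I(x)$. Writing $wgw^{-1}=g^{\rho}$ in $I(g)\otimes\mathbb{Q}$, this gives $r(xgx^{-1})=\rho\cdot r(g)$, so $r$ is conjugation invariant precisely when the ``conjugation stretch'' $\rho$ equals $1$. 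I expect this to be the main obstacle: bi-orderable groups \emph{do} admit inner scalings of an isolator (as in $BS(1,2)$), so ruling out $\rho\neq 1$ must use the global nature of (\ref{autom}). The plan is to argue that $\rho\neq 1$ together with $w\in I(x)$ would, via the $\prec$-positivity preserved by $\phi$, produce a $BS(1,n)$-type configuration inside $\langle x,g\rangle$ that cannot carry a $\phi$ satisfying (\ref{autom}) simultaneously on $x$, on $g$, and on their products. Once $\rho\equiv 1$ is established, conjugation invariance combines with the commuting case and with the rank-one abelian subgroup $I(z)$ of a fixed element (equivalently, with the uniform scaling on the torsion-free abelianization furnished by Lemma \ref{Morris-lemma}) to force $r$ globally constant. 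With $r\equiv c$ in hand, $\phi\neq\mathrm{id}$ gives $c\neq 1$, so $p_g\neq q_g$ for every $g$, as required.
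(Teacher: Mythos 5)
Your reduction of the lemma to ``$\phi$ fixes no nonidentity element'' is correct, and your commuting-case step (constancy of $r$ on pairs with $I(g)\neq I(h)$ via the product $gh$, and via a common power when $I(g)=I(h)$) is sound --- it is essentially the paper's Lemma \ref{Morris-lemma}. But the proof then stops being a proof: the entire non-commuting case, which you yourself identify as ``the remaining, and hardest, step,'' is left as a plan. You never establish that the conjugation stretch $\rho$ equals $1$, nor that a $BS(1,n)$-type configuration is incompatible with (\ref{autom}); phrases like ``I expect this to be the main obstacle'' and ``the plan is to argue'' mark a genuine gap, not a deferred routine verification. The gap is real: as you note, bi-orderable groups such as $BS(1,2)$ do admit inner automorphisms scaling an isolator, so ruling out $\rho\neq 1$ requires an actual argument exploiting (\ref{autom}) globally, and none is given. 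Moreover, your target --- global constancy of $r$ on all of $H_1$ --- is strictly stronger than the lemma and is, in the paper's logical order, only available \emph{after} Lemma \ref{no star phis} shows $H_1$ is abelian; trying to prove it first is what forces you into the hard conjugation analysis.

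The paper's proof avoids global constancy entirely with a local product trick, which is the idea missing from your attempt. Take a witness $g$ with $\phi(g)^s=g^t$, $s\neq t$, and suppose some $h$ has ratio $1$, i.e.\ $\phi(h)=h$ (uniqueness of roots). Then $\phi(g^sh)=g^th$, and (\ref{autom}) applied to the single element $g^sh$ gives $g^th\in I(g^sh)$. Since $I(g^sh)$ is a rank-one abelian subgroup containing $g^sh$, it contains $(g^th)(g^sh)^{-1}=g^{t-s}$, hence $g$ (isolatedness, $t\neq s$), hence $h=g^{-s}(g^sh)$; so $I(h)=I(g)$ by Lemma \ref{Ig lemma}. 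Now the abelian case --- your step 1, or Lemma \ref{Morris-lemma} applied to $\phi$ restricted to the rank-one group $I(g)$ --- forces the coprime ratio of $h$ to equal that of $g$, contradicting $\phi(h)=h$. In other words, one never needs $r$ constant on non-commuting pairs: a hypothetical fixed element is dragged into $I(g)$ by applying (\ref{autom}) to a well-chosen product, and the commuting case you already have finishes the job. Your proposal, as written, does not contain this or any substitute for it, so it does not prove the lemma.
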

\begin{proof}
Since $\phi$ is not the identity there exists $g\in H_1$ with $\phi(g) \neq g$, say $\phi(g)^s=g^t$ with $s \neq t$ (necessarily $s \neq t$ since $G$ is bi-orderable). Now let $h\in G$ be given. By (\ref{autom}) there exists $n,m >0$ such that $\phi(h)^n=h^m$. If $n=m$ then $\phi(h)=h$ since $G$ is bi-orderable. But $\phi(g^sh)=g^th$, so $g^th\in I(g^sh).$  But then $(g^th)(h^{-1}g^{-s})=g^{t-s}\in I(g^sh)$. Therefore $g \in I(g^sh)$, and so $h\in I(g^sh)$, and $I(g)=I(h)$. Now since $I(g)$ is abelian we may apply Lemma \ref{Morris-lemma} to the restriction isomorphism $\phi|_{I(g)}: I(g) \rightarrow I(g)$ arising from $\phi$.  We conclude that $n=s$ and $m=t$, contradicting the fact that $n=m$. Thus $n \neq m$.
\end{proof}

Note that we can improve the conclusion of the previous lemma, by using uniqueness of roots in a bi-orderable group to show that $p,q$ exist with $\gcd(p,q) =1$.  However this is not needed for our purposes.

\begin{lemma} 
\label{funny element}
Suppose $G$ is a bi-orderable group with finite index subgroups $H_1, H_2$ and that $\phi:H_1 \rightarrow H_2$ is an isomorphism satisfying (\ref{autom}).  Let $g, h \in H_1$ be given and suppose that $\phi(g)^m = g^n$ and $\phi(h)^{\ell}=h^k$.  Then 
\[g^{n-m}hg^{m-n} \in I(h) \mbox{ and } h^{k-\ell}gh^{\ell-k} \in I(g).
\]
\end{lemma}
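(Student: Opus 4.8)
The plan is to deduce both inclusions from condition (\ref{autom}) applied to a single cleverly chosen element, exploiting the symmetry between the two statements. Since the hypotheses and the two conclusions are interchanged by the substitution $g \leftrightarrow h$, $(m,n)\leftrightarrow(\ell,k)$, it suffices to prove the first inclusion $g^{n-m}hg^{m-n}\in I(h)$; the second then follows verbatim after swapping the roles of $g$ and $h$.

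The key idea is to apply (\ref{autom}) not to $g^m h^\ell$ but to the conjugated element $w = g^m h^\ell g^{-m}$, which lies in $H_1$ because $g,h\in H_1$. Since $\phi$ is a homomorphism with $\phi(g)^m=g^n$ and $\phi(h)^\ell=h^k$, I would first compute $\phi(w)=\phi(g)^m\phi(h)^\ell\phi(g)^{-m}=g^n h^k g^{-n}$. Condition (\ref{autom}) then guarantees $\phi(w)\in I(w)$, that is, $g^n h^k g^{-n}\in I(g^m h^\ell g^{-m})$.

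Next I would simplify the isolator on the right. Writing $g^m h^\ell g^{-m}=(g^m h g^{-m})^\ell$ and using that the isolator of a nontrivial power equals the isolator of the element, together with the conjugation-equivariance $I(a x a^{-1})=a\,I(x)\,a^{-1}$, gives $I(w)=g^m I(h) g^{-m}$. Hence $g^n h^k g^{-n}\in g^m I(h) g^{-m}$, and conjugating back by $g^{-m}$ yields $g^{n-m}h^k g^{m-n}\in I(h)$. Finally, observing that $g^{n-m}h^k g^{m-n}=(g^{n-m}h g^{m-n})^k$ and that $I(h)$ is an isolated subgroup, I conclude $g^{n-m}h g^{m-n}\in I(h)$.

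The computations are all routine; the only real choice, and thus the crux, is to conjugate $h^\ell$ by $g^m$ before invoking (\ref{autom}), so that the ambient isolator becomes a conjugate of $I(h)$ rather than the unrelated group $I(g^m h^\ell)$. The two small facts that need a line of justification are that $I(x^j)=I(x)$ for $j\neq 0$ and that conjugation commutes with taking isolators; both are immediate from the definition of the isolator of a rank-one subgroup. The second inclusion is obtained identically from $w'=h^\ell g^m h^{-\ell}$, for which $\phi(w')=h^k g^n h^{-k}\in I(w')=h^\ell I(g) h^{-\ell}$, giving $h^{k-\ell}g^n h^{\ell-k}\in I(g)$ and hence $h^{k-\ell}g h^{\ell-k}\in I(g)$.
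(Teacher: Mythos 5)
Your proof is correct and follows essentially the same route as the paper: both apply (\ref{autom}) to the conjugated element $g^m h^\ell g^{-m}$, identify its isolator with $g^m I(h) g^{-m}$ via $I(x^j)=I(x)$ and conjugation-equivariance, and then conjugate back. The only difference is cosmetic --- the paper strips the exponent $k$ (using isolatedness) before conjugating by $g^{-m}$, while you do it afterwards --- which changes nothing.
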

\begin{proof}
By symmetry, it suffices to show only $g^{n-m}hg^{m-n}\in I(h)$. First, notice that $\phi(f) \in I(f)$ for all $f\in H_1$ by (\ref{autom}).  Therefore $\phi(g^mh^\ell g^{-m})=g^nh^kg^{-n} \in I(g^mhg^{-m})$, and since $I(g^mhg^{-m})$ is isolated we conclude $g^nhg^{-n}\in I(g^mhg^{-m})$.  Next, notice that if $x\in I(h)$ then $g^ixg^{-i}\in I(g^ihg^{-i})$ for all $ i\in \mathbb{Z}$, and thus $g^{n-m}hg^{m-n}\in I(h)$.
\end{proof}

\begin{lemma}
\label{no star phis}
Suppose $G$ is a bi-orderable group with finite index subgroups $H_1, H_2$, that $\phi:H_1 \rightarrow H_2$ is an isomorphism satisfying (\ref{autom}), and that $\phi$ is not the identity.  Then $H_1$ is abelian.
\end{lemma}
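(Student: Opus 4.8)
The plan is to prove the contrapositive of non-commutativity: assuming $\phi \neq \mathrm{id}$, I will show that \emph{every} pair $g,h \in H_1$ commutes, so that $H_1$ is abelian. Fix $g,h \in H_1$. Using (\ref{autom}) choose exponents with $\phi(g)^m = g^n$ and $\phi(h)^\ell = h^k$; since $\phi$ is not the identity, Lemma~\ref{inequality} lets me take the exponents for $g$ so that $m \neq n$. In particular $d := n-m \neq 0$, and $\phi(g) \neq g$ (otherwise $g^m = \phi(g)^m = g^n$ would force $m=n$).

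First I would feed these exponents into Lemma~\ref{funny element}, which gives $g^{d} h g^{-d} \in I(h)$. As $I(h)$ is rank one abelian and $g^d h g^{-d} \neq 1$, its isolator is again $I(h)$, so conjugation by $g^d$ normalizes $I(h)$. Since $g^d h g^{-d} \in I(h)$, some positive power lies in $\langle h \rangle$, yielding integers $a>0$, $b \neq 0$ with
\[ g^{d} h^{a} g^{-d} = h^{b}. \]
Viewing conjugation by $g^d$ as an automorphism of the rank-one torsion-free abelian group $I(h)$ — necessarily multiplication by a rational number — it is multiplication by $b/a$.

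The crux is to produce a second element inducing the \emph{same} scaling, which I obtain by applying the isomorphism $\phi$ to the displayed equation:
\[ \phi(g)^{d}\,\phi(h)^{a}\,\phi(g)^{-d} = \phi(h)^{b}. \]
Because $\phi(h) \in I(h)\setminus\{1\}$ by (\ref{autom}), the identical argument shows $\phi(g)^d$ also normalizes $I(h)$ and acts there by multiplication by $b/a$. Since an automorphism of a rank-one torsion-free abelian group is determined by its rational multiplier, conjugation by $g^d$ and by $\phi(g)^d$ agree on all of $I(h)$, so $\phi(g)^{-d}g^{d}$ centralizes $I(h)$ and in particular commutes with $h$. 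Now $g$ and $\phi(g)$ both lie in the abelian group $I(g)$, hence commute, so $\phi(g)^{-d}g^{d} = c^{d}$ where $c := \phi(g)^{-1}g \in I(g)$, and $c \neq 1$ because $\phi(g)\neq g$. Thus $c^{d}$ commutes with $h$, and the standard fact that commuting powers in a bi-orderable group force the base elements to commute gives that $c$ commutes with $h$. Finally, as $c \in I(g)\setminus\{1\}$ some positive power of $c$ equals a nonzero power of $g$, and that power commutes with $h$; applying the same fact once more yields that $g$ commutes with $h$.

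I expect the main difficulty to be conceptual rather than computational. In a bi-orderable group an element genuinely \emph{can} be conjugate to a proper power of itself (for instance in $\langle a,t \mid tat^{-1}=a^{2}\rangle$), so one cannot simply claim that conjugation fixes $I(h)$ pointwise; any naive attempt to rule out such scaling directly will fail. The decisive manoeuvre is instead to compare the two conjugations, by $g^{d}$ and by $\phi(g)^{d}$: property (\ref{autom}) forces them to scale $I(h)$ by the \emph{same} factor $b/a$, and it is precisely this cancellation that converts the hypothesis on $\phi$ into the vanishing of a commutator. The points requiring care are the verifications that both $g^d$ and $\phi(g)^d$ normalize $I(h)$ and that their induced multipliers coincide, together with the bookkeeping needed to pass from $c^d$ commuting with $h$ back to $g$ commuting with $h$ via the commuting-powers property.
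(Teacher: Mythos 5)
Your proof is correct, but it takes a genuinely different route from the paper's. The paper also fixes $g,h \in H_1$ and shows they commute, but it invokes Lemma~\ref{inequality} for \emph{both} elements, obtaining $\phi(g)^m=g^n$ and $\phi(h)^{\ell}=h^k$ with $m\neq n$ and $k\neq \ell$; it then splits into the cases $I(g)=I(h)$ (where commutativity is immediate) and $I(g)\cap I(h)=\{1\}$, and uses Lemma~\ref{funny element} twice to place the single word $h^{k-\ell}g^{n-m}h^{\ell-k}g^{m-n}$ in both $I(g)$ and $I(h)$, forcing it to be trivial by Lemma~\ref{Ig lemma}; the nontrivial powers $h^{k-\ell}$ and $g^{n-m}$ then commute, and bi-orderability finishes. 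You instead need the strict exponent inequality only for $g$ (bare (\ref{autom}) suffices for $h$), you need no case split on $I(g)$ versus $I(h)$, and you replace the commutator computation by a structural argument: both $g^d$ and $\phi(g)^d$ normalize the rank-one torsion-free abelian group $I(h)$, and as automorphisms of a subgroup of $\mathbb{Q}$ they act by the \emph{same} rational multiplier $b/a$, so $c^d=(\phi(g)^{-1}g)^d$ centralizes $I(h)$, after which two applications of the commuting-powers property descend to $g$ commuting with $h$. What the paper's computation buys is brevity and elementarity --- it never needs that automorphisms of rank-one torsion-free abelian groups are rational scalings, only that isolators of single elements are isolated abelian subgroups that coincide or intersect trivially. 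What your argument buys is the multiplier-comparison mechanism, which makes transparent exactly how (\ref{autom}) rigidifies the conjugation action, and it correctly sidesteps the pitfall you identify (an element can be conjugate to a proper power of itself in a bi-orderable group, so one cannot hope that conjugation fixes $I(h)$ pointwise). One housekeeping remark: your deductions ``$\phi(g)\neq g$'' and ``$g^d h g^{-d}\neq 1$'' presuppose $g\neq 1$ and $h\neq 1$ (for $g=1$ Lemma~\ref{inequality} holds vacuously with $\phi(g)=g$); these cases are of course trivial for commutativity, but you should dismiss them explicitly before fixing exponents.
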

\begin{proof}
Let $g,h \in H_1$ be given. If $I(g)=I(h)$ then $g$ and $h$ commute. Thus we assume $I(g) \neq I(h)$.  By Lemma \ref{inequality} there exist $m,n >0$ and $k, \ell >0$ with $m\neq n$ and $k\neq \ell$ such that $\phi(g)^m=g^n$ and $\phi(h)^\ell = h^k$. Consider $h^{k-\ell}g^{n-m}h^{\ell-k}g^{m-n}$. On one hand, we have $h^{k-\ell}g^{n-m}h^{\ell-k}g^{m-n}=(h^{k-\ell}g^{n-m}h^{\ell-k})\cdot g^{m-n} \in I(g)$, since it is a product of elements of $I(g)$ (here we use Lemma \ref{funny element}). On the other hand, $h^{k-\ell}\cdot(g^{n-m}h^{\ell-k}g^{m-n})\in I(h)$ by similar reasoning. By Lemma \ref{Ig lemma} $I(g)\cap I(h)=\{1\}$ and so $h^{k-\ell}g^{n-m}h^{\ell-k}g^{m-n}$=1. But this means the nontrivial powers $h^{k-\ell}$ and $g^{n-m}$ commute, so $h$ and $g$ commute since $G$ is bi-orderable.  Thus $H_1$ is abelian.
\end{proof}

\begin{proof}[Proof of Theorem \ref{main theorem}]  Let $G$ be a bi-orderable group and let $\phi \in \mathrm{Aut}(G)$ be nontrivial.  If $G$ is nonabelian, then by Lemma \ref{no star phis} $\phi$ cannot satisfy (\ref{autom}). By Proposition \ref{main lemma} $\phi$ acts nontrivially on $\mathrm{LO}(G)$, so the action of $\mathrm{Aut}(G)$ on $\mathrm{LO}(G)$ is faithful.

If $G$ is abelian, and if $\phi$ does not satisfy (\ref{autom}), then Proposition \ref{main lemma} tells us that $\phi$ acts nontrivially on $\mathrm{LO}(G)$.  If $\phi$ does satisfy (\ref{autom}), then Lemma \ref{Morris-lemma} tells us that $\phi = \tau_{p/q}$ for some $p/q \in \mathbb{Q}$.  It is easy to see that in this case, $\phi$ acts trivially on $\mathrm{LO}(G)$.  Thus the kernel of the action of $ \mathrm{Aut}(G)$ on $\mathrm{LO}(G)$ consists exactly of the automorphisms $\tau_{p/q}$.
\end{proof}

\section{Non-bi-orderable groups}
\label{nonbo groups}

For certain classes of left-orderable groups, it is sometimes sufficient to examine the action of $\mathrm{Aut}(G)$ on a small subset of $\mathrm{LO}(G)$ (perhaps even a finite subset) in order to determine that the action is faithful.

Recall that a left-ordering of $G$ is \textit{discrete} if there is a smallest positive element.  If $\phi: G\rightarrow G$ is an automorphism, and if $P$ is the positive cone of a discrete left-ordering with smallest positive element $g \in G$, then $\phi(P)$ is the positive cone of a discrete left-ordering whose smallest positive element is $\phi(g)$.  Thus if $g \neq \phi(g)$, then $P \neq \phi(P)$. We apply this idea in the following proposition.

\begin{proposition}
\label{smallest element proposition}
Suppose that $G$ is a left-orderable group with generators $\{ g_i \}_{i \in I}$, and that for each $i \in I$ there exists $P_i \in \mathrm{LO}(G)$ which is the positive cone of a discrete left-ordering with $g_i$ as smallest positive element.  Then $\mathrm{Aut}(G)$ acts faithfully on $\mathrm{LO}(G)$.
\end{proposition}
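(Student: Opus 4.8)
The plan is to prove faithfulness directly: I will show that every nontrivial $\phi \in \mathrm{Aut}(G)$ moves at least one of the supplied positive cones $P_i$, and hence acts nontrivially on $\mathrm{LO}(G)$. The whole argument rests on the observation recorded immediately before the statement, namely that the smallest positive element of a discrete left-ordering is an invariant of its positive cone, and that applying $\phi$ to such a cone produces a discrete left-ordering whose smallest positive element is the $\phi$-image of the original smallest positive element.

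First I would fix a nontrivial $\phi \in \mathrm{Aut}(G)$ and argue that $\phi$ cannot fix all of the generators $g_i$. Indeed, if $\phi(g_i) = g_i$ for every $i \in I$, then since $\{g_i\}_{i \in I}$ generates $G$ and $\phi$ is a homomorphism, $\phi$ would fix every element of $G$, forcing $\phi = \mathrm{id}$, contrary to assumption. Hence there is an index $i$ with $\phi(g_i) \neq g_i$.

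Next I would apply $\phi$ to the positive cone $P_i$ furnished by the hypothesis. By the preceding observation, $\phi(P_i)$ is the positive cone of a discrete left-ordering whose smallest positive element is $\phi(g_i)$, whereas $P_i$ is a discrete left-ordering with smallest positive element $g_i$. Since $\phi(g_i) \neq g_i$ and the smallest positive element of a discrete left-ordering is uniquely determined by its positive cone, I conclude $\phi(P_i) \neq P_i$. Thus $\phi$ acts nontrivially on $\mathrm{LO}(G)$, and as $\phi$ was an arbitrary nontrivial automorphism, the action of $\mathrm{Aut}(G)$ on $\mathrm{LO}(G)$ is faithful.

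The argument is short and presents no serious obstacle; the only point requiring care is the elementary but essential remark that a homomorphism fixing a generating set must be the identity, which is what lets me pass from ``$\phi$ is nontrivial'' to ``$\phi$ moves some generator.'' Everything else is immediate from the behaviour of $\phi$ on discrete orderings already established in the excerpt.
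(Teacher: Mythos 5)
Your proof is correct and is essentially identical to the paper's: the paper likewise notes that a nontrivial automorphism must move some generator $g_i$, and then invokes the observation (stated just before the proposition) that $\phi(P_i)$ is a discrete left-ordering with smallest positive element $\phi(g_i)\neq g_i$, so $\phi(P_i)\neq P_i$. No gaps; your write-up just makes explicit the two small points the paper leaves implicit.
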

\begin{proof}
If $\phi : G \rightarrow G$ is a nontrivial automorphism, then there exists a generator $g_i$ such that $\phi(g_i) \neq g_i$.  But then $\phi(P_i) \neq P_i$, so that $\phi$ acts nontrivially on $\mathrm{LO}(G)$.
\end{proof}

\begin{example}
Recall the Artin presentation of braid group $B_n$ is given by
\[ B_n = \left< \sigma_1 , \ldots, \sigma_{n-1}  \hspace{1em}
\begin{array}{|c}
\sigma_i \sigma_j = \sigma_j \sigma_i \mbox{ if $|i-j|>1$}\\
\sigma_i \sigma_j \sigma_i = \sigma_j \sigma_i \sigma_j \mbox{ if $|i-j|=1$} \end{array} 
 \right>.
\]
By Dehornoy, the braid groups $B_n$ are left orderable for all $n$, as is the braid group $B_{\infty}$ \cite{dehornoy94}. The \textit{Dehornoy ordering} of $B_n$ is a left-ordering that is defined in terms of representative words of braids as follows: A word $w$ in the generators $\sigma_1,\dots,\sigma_{n-1}$ is called $i$-positive (respectively $i$-negative) if $w$ contains at least one occurence $\sigma_i$, no occurence of $\sigma_1,\dots ,\sigma_{i-1}$, and every occurence of $\sigma_i$ has positive (respectively negative) exponent. A braid $\beta \in B_n$ is called $i$-positive (respectively $i$-negative) if it admits a representative word $w$ in the generators $\sigma_1,\dots, \sigma_{n-1}$ that is $i$-positive (respectively $i$-negative). The Dehornoy ordering of the braid group $B_n$ is the ordering whose positive cone $P_D$ is the set of all braids $\beta \in B_n$ that are $i$-positive for some $i$.  Using $sh^{n-j} : B_j \rightarrow B_n$ to denote the shift homomorphism sending $\sigma_i$ to $\sigma_{i+j}$, the convex subgroups of $B_n$ are $sh^{n-j}(B_{j}) = \langle \sigma_{n-j+1}, \ldots, \sigma_{n-1} \rangle \subset B_n$ \cite{DDRW08}, in particular the Dehornoy ordering is discrete with smallest positive element $\sigma_{n-1}$.

We can also define a related left-ordering as follows: a word $w$ in generators $\sigma_1,\dots,\sigma_{n-1}$ is called \textit{$i$-reverse positive}, if it has no occurence of $\sigma_{i+1},\dots,\sigma_{n-1}$, and every occurence of $\sigma_i$ has positive exponent. Now similar to Dehornoy ordering, define an ordering $<'_{D}$ on $B_n$, whose positive cone $P'_{D}$ is consists of all braids $\beta \in B_n$ that are $i$-reverse positive for some $i$. 

It a straightforward check that $<'_{D}$ is a also a discrete ordering of $B_n$, with $\sigma_{1}$ as its least positive element. Moreover, the convex subgroups of $B_n$ with respect to $<'_{D}$ are exactly the subgroups $B_j = \langle \sigma_1, \ldots, \sigma_{j-1} \rangle \subset B_n$ for $1\leq j \leq n$.

Now given any $i$ where $1\leq i \leq n-1$, we can construct a left ordering $<_i$ on $B_n$ with $\sigma_i$ as its least positive element. First, we left-order $B_n$ with $<'_{D}$. Since $B_{i+1}$ is convex with respect to $P'_D$, we can replace the left ordering $<'_D$ on $B_{i+1}$ with the left ordering of $<_D$. Denote the resulting ordering of $B_n$ by $<_i$. By construction, $<_i$ is a discrete ordering with $\sigma_i$ as its least positive element. Based on this construction and Proposition \ref{smallest element proposition}, $\mathrm{Aut}(B_n)$ acts faithfully on $\mathrm{LO}(B_n)$.  

This same construction can also be used to produce a left-ordering of $B_{\infty}$ with $\sigma_i$ as smallest positive element for all $i \geq 1$.  Thus $\mathrm{Aut}(B_{\infty})$ acts faithfully on $\mathrm{LO}(B_{\infty})$ as well.
\qed
\end{example}

If $K$ and $H$ are bi-orderable groups and 
\[ 1 \rightarrow K \rightarrow G \rightarrow H \rightarrow 1
\]
is a short exact sequence, then $G$ can be lexicographically bi-ordered if and only if there exists a bi-ordering of $K$ whose positive cone is invariant under the conjugation action of $H$.  By relaxing this condition, we are able to create groups which are \textit{not} bi-orderable, but for which the automorphism group acts faithfully on the space of left-orderings.

\begin{theorem}
\label{extension theorem}
Suppose that $G$ is left-orderable and that
\[ 1 \rightarrow K \rightarrow G \rightarrow \mathbb{Z} \rightarrow 1
\]
is a short exact sequence of groups.  Suppose that $\mathrm{Aut}(K)$ acts faithfully on $\mathrm{LO}(K)$.  If conjugation by the generator of $\mathbb{Z}$ preserves a left-ordering of $K$, then $\mathrm{Aut}(G)$ acts faithfully on $\mathrm{LO}(G)$.
\end{theorem}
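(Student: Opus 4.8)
The plan is to exploit the splitting $G \cong K \rtimes_{\alpha} \langle t \rangle$ with $\langle t \rangle \cong \mathbb{Z}$ (the sequence splits since $\mathbb{Z}$ is free), where I choose the lift $t$ so that the automorphism $\alpha$ of $K$ given by conjugation by $t$ preserves a cone $Q \in \mathrm{LO}(K)$, as the hypothesis permits. Writing a general element as $kt^n$ and $\bar g = n$ for its image in $\mathbb{Z}$, I would first assemble two families of left-orderings of $G$ and then show that any $\phi \in \mathrm{Aut}(G)$ with $\phi(P) = P$ for all $P \in \mathrm{LO}(G)$ must be the identity. The first family is lexicographic with $K$ convex: since $K$ is normal, for every $Q' \in \mathrm{LO}(K)$ and every choice of sign on the quotient the rule ``$g \in P$ iff $\bar g > 0$, or $\bar g = 0$ and $g \in Q'$'' defines a left-ordering $P_{Q'}$ with $P_{Q'} \cap K = Q'$ and $K$ convex; no invariance is needed for these. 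Because $K$ is convex with archimedean quotient $\mathbb{Z}$, and the convex subgroups of a left-ordered group form a chain under inclusion, $K$ is the unique maximal proper convex subgroup of each $P_{Q'}$.

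The second family is where the invariance hypothesis enters. Since $\alpha(Q) = Q$, hence $\alpha^n(Q) = Q$ for all $n$, I would verify that the rule ``$kt^n \in R$ iff $k \in Q$, or $k = 1$ and $n > 0$'' defines a left-ordering of $G$: the containment $R \cdot R \subseteq R$ is exactly where $\alpha^n(Q) = Q$ is used, and trichotomy is routine. In $R$ the cyclic subgroup $\langle t \rangle$ is convex, and a direct computation of the elements lying strictly between $1$ and $t^m$ shows that $\langle t \rangle$ is in fact the unique minimal nontrivial convex subgroup of $R$.

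Now suppose $\phi$ acts trivially on $\mathrm{LO}(G)$. Applying $\phi$ to any $P_{Q'}$: as $\phi$ preserves $P_{Q'}$ it induces an inclusion-preserving bijection of the chain of convex subgroups, so it fixes the unique maximal proper one, giving $\phi(K) = K$. Intersecting $\phi(P_{Q'}) = P_{Q'}$ with $K = \phi(K)$ then yields $\phi(Q') = Q'$ for every $Q' \in \mathrm{LO}(K)$, so $\phi|_K$ lies in the kernel of the action of $\mathrm{Aut}(K)$ on $\mathrm{LO}(K)$; by hypothesis $\phi|_K = \mathrm{id}$. Writing $\phi(t) = k_0 t^{\epsilon}$ with $k_0 \in K$ and $\epsilon = \pm 1$, I would apply $\phi$ to $R$: it must fix the unique minimal nontrivial convex subgroup, so $\phi(\langle t \rangle) = \langle t \rangle$ and thus $\phi(t) \in \langle t \rangle$. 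Since $K \cap \langle t \rangle = \{1\}$ this forces $k_0 = 1$, and then testing against $P_Q$ with $t$ chosen positive (where $t \in P_Q$ forces $\phi(t) = t^{\epsilon} \in P_Q$, hence $\epsilon = 1$) gives $\phi(t) = t$. Therefore $\phi = \mathrm{id}$, and the action of $\mathrm{Aut}(G)$ on $\mathrm{LO}(G)$ is faithful.

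The crux, and the only place the invariance hypothesis is essential, is detecting the $K$-component $k_0$ of $\phi(t)$. The $K$-convex orderings are completely blind to $k_0$: both $t$ and $k_0 t$ have the same positive image in $\mathbb{Z}$, and one checks that $\phi$ preserves every $P_{Q'}$ irrespective of $k_0$. Thus the argument cannot close using the first family alone; it is precisely the $\alpha$-invariant cone $Q$ that manufactures the orderings $R$ in which $\langle t \rangle$ is distinguished as a minimal convex subgroup, and this is what finally pins down $k_0 = 1$. Consequently the main technical point to get right is the verification that $R$ is genuinely left-invariant and that $\langle t \rangle$ is its unique minimal nontrivial convex subgroup.
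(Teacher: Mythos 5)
Your proof is correct and takes essentially the same route as the paper's: the lexicographic orderings with $K$ convex handle the cases $\phi(K) \neq K$ and $\phi|_K \neq \mathrm{id}$, and your cone $R$ is exactly the paper's cone built from the $t$-invariant ordering of $K$, with your ``unique minimal nontrivial convex subgroup $\langle t \rangle$'' playing the role of the paper's ``discrete ordering with smallest positive element $t$.'' The only difference is organizational: you argue the contrapositive by pinning down $\phi$ through convex-subgroup rigidity, where the paper constructs, for each nontrivial $\phi$, an explicit positive cone it moves.
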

\begin{proof}
Suppose that $\phi : G \rightarrow G$ is a nontrivial automorphism.  If $\phi(K) \neq K$, choose $g \in K$ with $\phi(g) \notin K$.  Then by choosing signs appropriately, we may use the given short exact sequence to construct a positive cone  $P \subset G$ for which $g \in P$ while $\phi(g) \notin P$.  Thus $\phi(P) \neq  P$. 

On the other hand, suppose that $\phi(K) =K$.  If there exists $k \in K$ for which $\phi(k) \neq k$, then we know there is a positive cone $P_K \in \mathrm{LO}(K)$ for which $\phi(P_K) \neq P_K$ since  $\mathrm{Aut}(K)$ acts faithfully on $\mathrm{LO}(K)$.  Using the given short exact sequence we may extend $P_K$ to a positive cone $P \subset G$ satisfying $\phi(P) \neq P$.

Last, suppose that $\phi(k) =k$ for all $k \in K$, and choose $t \in G$ which maps to the generator of $\mathbb{Z}$.  Equip $K$ with a positive cone $P_K$ that is preserved by conjugation by $t$, and proceed as in \cite[Lemma 3.4]{LRR09}.  Note that every $g \in G$ can be written uniquely as $kt^n$ for some $n \in \mathbb{Z}$ and $k \in K$, and since $\phi$ is nontrivial and satisfies $\phi(k) = k$ for all $k \in K$ it follows that $\phi(t) \neq t$.  Construct a positive cone $P \subset G$ as follows:  an element $kt^n$ is in $P$ if $k \in P_K$ or $k=1$ and $n>0$.  Then $P$ clearly satisfies $P \cup P^{-1} = G \setminus \{ 1\}$ and $P \cap P^{-1} = \emptyset$.  Moreover if $kt^n$ and $k't^m$ are both in $P$, then so is $kt^nk't^m = k(t^nk't^{-n})t^{m+n}$ since conjugation by $t$ preserves $P_K$.  One can easily verify that the subgroup $\langle t \rangle$ is convex relative to the ordering of $G$ determined by $P$, so that $P$ determines a discrete ordering of $G$ with $t$ as smallest positive element.  The positive cone $\phi(P)$ will determine a left-ordering of $G$ with $\phi(t)$ as smallest positive element.  As $\phi(t) \neq t$, we conclude that $\phi(P) \neq P$.
\end{proof}

If $K$ is a bi-orderable group, automorphisms $\phi:K \rightarrow K$  which preserve a left-ordering of $K$ but not a bi-ordering are likely quite common.  However, there is little in the literature dealing with automorphism-invariant left-orderings, as the focus has primarily been on automorphism-invariant bi-orderings \cite{PR03, PR06, LRR08}.   

Here is one example of how an automorphism-invariant left-ordering (which is not a bi-ordering) may arise, which we use to illustrate an application of Theorem \ref{extension theorem}.

\begin{example}
Set $K = \mathbb{Q}^2 \rtimes \mathbb{Z}$ where the conjugation action of $\mathbb{Z}$ on $\mathbb{Q}^2$ is by the matrix $A = \left( \begin{smallmatrix} 1&2\\ 1&1 \end{smallmatrix} \right)$.  Then $K$ is bi-orderable, since the action of $A$ preserves the bi-ordering of $\mathbb{Z}^2$ defined by $(a,b) > (0,0)$ if and only if $(a,b)\cdot (\sqrt{2}, 1) = b+\sqrt{2} a>0$.  In fact, since the eigenvectors of $A$ are $\left( \begin{smallmatrix} \sqrt{2}\\ 1 \end{smallmatrix} \right)$ and $\left( \begin{smallmatrix} -\sqrt{2}\\ 1 \end{smallmatrix} \right)$ with positive and negative eigenvalues respectively, the ordering described above (and its opposite) are the only orderings of $\mathbb{Q}^2$ preserved by $A$.  Thus $K$ is bi-orderable and nonabelian, so by Theorem \ref{main theorem} $\mathrm{Aut}(K)$ acts faithfully on $\mathrm{LO}(K)$. 

Now we define $G = K \rtimes \mathbb{Z}$ where the action of the generator of $\mathbb{Z}$ on an element of $K$ is $((a,b), c) \mapsto (-A(a,b)^T,c)$.  The action of $-A$ on the subgroup $\mathbb{Q} ^2 \subset K$, having the same eigenvectors as $A$ but with eigenvalues of opposite sign, preserves only the ordering defined by $(a,b) > (0,0)$ if and only if $(a,b)\cdot (-\sqrt{2}, 1) = b -\sqrt{2} a>0$, and its opposite.   Using this ordering on $\mathbb{Q}^2$, and lexicographically ordering $K$ using the short exact sequence $ 1 \rightarrow \mathbb{Q}^2 \rightarrow K \rightarrow \mathbb{Z} \rightarrow 1$, we arrive at a left-ordering of $K$ preserved by the action of the generator of $\mathbb{Z}$.  

We conclude $\mathrm{Aut}(G)$ will act faithfully on $\mathrm{LO}(G)$ by Theorem \ref{extension theorem}.

Note that $G$ is left-orderable by a straightforward short exact sequence argument, but is not bi-orderable since the actions of $A$ and $-A$ on $\mathbb{Q}^2 \subset K$ do not preserve a common ordering, so Theorem \ref{main theorem} does not apply.  Proposition \ref{smallest element proposition} also cannot apply to $G$ since any generator of $\mathbb{Q}^2 \subset G$ cannot be the smallest positive element of a left-ordering of $G$.
 \qed
\end{example}

Despite these extensions and examples, one cannot hope to replace ``bi-orderable" in Theorem \ref{main theorem} with either the weaker condition of local indicability or the condition that $G$ admit an ordering that is recurrent for every cyclic subgroup (See \cite{Morris06} for more information on recurrent orderings).  Koberda points out that for the Klein bottle group, $K = \langle x,y \mid xyx^{-1} = y^{-1} \rangle$, the action of $\mathrm{Aut}(K)$ on $\mathrm{LO}(K)$ is not faithful.  Yet $K$ is both locally indicable and admits recurrent orderings, as it only has four left-orderings.

\section{Applications and generalizations}
\label{VLOG}

The action of $\mathrm{Aut}(G)$ on $\mathrm{LO}(G)$ is connected to the action of $\mathrm{Aut}(G)$ on $\partial G$ by the following theorem.  Though not stated in full generality in \cite{Koberda11}, the proof below appears there as part of the proof of \cite[Theorem 1.2]{Koberda11}.  As it is relatively short, we repeat it here for the reader's convenience.  For background and futher information on hyperbolic groups, see \cite{Koberda11, Gromov87, KB02}.

\begin{proposition}
\label{Koberda prop}
If $G$ is a left-orderable hyperbolic group and $\mathrm{Aut}(G)$ acts faithfully on $\mathrm{LO}(G)$, then it acts faithfully on $\partial G$.
\end{proposition}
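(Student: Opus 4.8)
The plan is to show that the kernel of the action on $\partial G$ is contained in the kernel of the action on $\mathrm{LO}(G)$, and then to appeal to the hypothesis that the latter action is faithful. So I would take $\phi \in \mathrm{Aut}(G)$ acting trivially on $\partial G$ and aim to prove $\phi = \mathrm{id}$. Since $G$ is a finitely generated hyperbolic group, $\phi$ carries a finite generating set to another and is therefore a bi-Lipschitz self-map of the Cayley graph, hence a quasi-isometry; it thus extends to a homeomorphism $\hat\phi$ of $\partial G$, and the assumption that $\phi$ acts trivially means exactly that $\hat\phi = \mathrm{id}_{\partial G}$.

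Next I would exploit that $G$, being left-orderable, is torsion-free, so every nonidentity $g \in G$ has infinite order and acts loxodromically, with distinct attracting and repelling fixed points $g^{+}, g^{-} \in \partial G$. The boundary extension is functorial on these points: from $\phi(g^{k}) = \phi(g)^{k}$ and continuity at infinity one gets $\hat\phi(g^{\pm}) = \phi(g)^{\pm}$. Since $\hat\phi = \mathrm{id}$, this yields $\phi(g)^{+} = g^{+}$ and $\phi(g)^{-} = g^{-}$ for every nonidentity $g$; that is, $g$ and $\phi(g)$ have the same ordered pair of endpoints.

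The crux is to convert this boundary coincidence into the algebraic condition~(\ref{autom}). Both $g$ and $\phi(g)$ lie in the maximal virtually cyclic subgroup $E(g)$ stabilizing $\{g^{+}, g^{-}\}$; since any two infinite-order elements of an infinite virtually cyclic group share a nontrivial power, and since $g$ and $\phi(g)$ have the same \emph{attracting} endpoint, the common power can be taken with matching orientation, giving $m, n > 0$ with $\phi(g)^{n} = g^{m}$. As $g$ was arbitrary, $\phi$ satisfies~(\ref{autom}). I expect this to be the main obstacle, since it is the only point at which genuine hyperbolic dynamics enters---namely loxodromic fixed-point behaviour and the virtual cyclicity of endpoint stabilizers---and where one must be careful that the shared attracting point forces the power relation to have the correct (positive) signs; left-orderability is precisely what makes this dynamical picture available for \emph{every} nonidentity element by ruling out torsion.

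Finally I would note that condition~(\ref{autom}) forces $\phi$ to preserve every positive cone, by the same elementary reasoning as in the proof of Proposition~\ref{main lemma}: if $P \in \mathrm{LO}(G)$ and $g \in P$, then $\phi(g)^{n} = g^{m} \in P$, and since $\phi(g) \neq 1$ this rules out $\phi(g) \in P^{-1}$, so $\phi(g) \in P$; hence $\phi(P) \subseteq P$, and applying this to $P^{-1}$ as well gives $\phi(P) = P$. Thus $\phi$ lies in the kernel of the action on $\mathrm{LO}(G)$, which is trivial by hypothesis, so $\phi = \mathrm{id}$ and the action on $\partial G$ is faithful.
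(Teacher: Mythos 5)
Your proof is correct, and it is essentially the paper's argument run in the contrapositive direction, with a genuinely different decomposition. The paper starts from a nontrivial $\phi$, uses faithfulness on $\mathrm{LO}(G)$ at the outset to produce a positive cone $P$ and an element $g$ with $g \in P$ and $\phi(g) \notin P$ (which immediately rules out $\phi(g)^{\ell} = g^{k}$ with $k, \ell > 0$), and then splits into two cases: either $g^{-k} = \phi(g)^{\ell}$ for some $k, \ell > 0$, in which case $\hat\phi$ sends $x_g$ to $y_g$, or $\phi(g) \notin I(g)$, in which case $\langle g, \phi(g) \rangle$ is not virtually cyclic and $x_g \neq x_{\phi(g)}$. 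You instead prove the kernel containment directly: triviality on $\partial G$ forces $g$ and $\phi(g)$ to share attracting and repelling fixed points for every nonidentity $g$, hence to lie in the virtually cyclic (and, by torsion-freeness from left-orderability, infinite cyclic) stabilizer $E(g)$ of the pair $\{x_g, y_g\}$, yielding condition (\ref{autom}) with correctly matched signs; and you supply the easy observation --- implicit in the paper's Proposition \ref{main lemma} but never isolated there --- that (\ref{autom}) forces $\phi$ to preserve every positive cone of a left-ordering. The dynamical input is the same standard package of facts about loxodromic elements, but the routes differ in what they require: your version buys a cleaner structural statement, with (\ref{autom}) made explicit as the bridge between the two kernels and with the sign bookkeeping (shared attracting point implies positive exponents) handled once and for all, at the cost of invoking the structure of endpoint-pair stabilizers $E(g)$ and establishing (\ref{autom}) for \emph{every} element; the paper's version needs only a single witness $g$ extracted from a separating positive cone, never mentions $E(g)$, and instead uses the equivalent fact that two elements generating a non-virtually-cyclic subgroup have distinct limit points. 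Your orientation argument corresponds exactly to the paper's Case 1, where an orientation-reversing common power is detected by $\hat\phi(x_g) = y_g$.
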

\begin{proof}
Recall that for each element $g \in G$, there are two distinct points in the boundary $\partial G$ defined by $x_g = \lim_{n \to \infty} g^n$ and $y_g = \lim_{n \to \infty} g^{-n}$.  Moreover, given $g, h \in G$ if $\langle g, h \rangle$ is not a virtually cyclic group, then $g$ and $h$ determine distinct points on the boundary.

Choose a nontrivial automorphism $\phi \in \mathrm{Aut}(G)$, $g \in G$ and $P \in \mathrm{LO}(G)$ such that $g \in P$ and $\phi(g) \notin P$ (and thus $\phi(P) \neq P$).  Since we cannot have $g^{k} = \phi(g)^{\ell}$ for some $k,\ell >0$, there are two cases.  Recall we defined $I(g)$ in Section \ref{main section} to be the isolator of the cyclic subgroup $\langle g \rangle$.

\noindent \textbf{Case 1.} $\phi(g) \in I(g)$ and there exists $k,\ell >0$ such that $g^{-k} = \phi(g)^{\ell}$.  In this case, observe that $x_g = \lim_{n \to \infty} (g^{\ell})^n$, so that $\phi(x_g) = \lim_{n \to \infty} \phi(g^{\ell})^n = \lim_{n \to \infty}  (g^{-k})^n = y_g$, so that $\phi$ acts nontrivially on $\partial G$.

\noindent \textbf{Case 2.} $\phi(g) \notin I(g)$.  Then $\phi(g)$ and $g$ do not generate a virtually cyclic subgroup, so $x_g$ and $\phi(x_g) = x_{\phi(g)}$ are distinct.  Thus $\phi$ acts nontrivially on $\partial G$.
\end{proof}

Consequently, by applying Theorem \ref{main theorem}, we arrive at Corollary \ref{hyperbolic}.  If $G$ is hyperbolic and satisfies the hypotheses of Theorem \ref{extension theorem} or Proposition \ref{smallest element proposition}, then $\mathrm{Aut}(G)$ acts faithfully on $\partial G$, too.  However it seems difficult to construct a hyperbolic group $G$ satisfying the hypotheses of either result.

There are also two natural generalizations one may consider, both developed by Witte Morris in \cite{Morris12}.  First, one may replace the automorphism group with the commensurator group $\mathrm{Comm}(G)$ of $G$.  Recall that a \textit{commensuration} of a group $G$ is an isomorphism $\phi :H_1 \rightarrow H_2$ of finite index subgroups $H_i \subset G$.  Two commensurations $\phi :H_1 \rightarrow H_2$ and $\phi' :H_1' \rightarrow H_2'$ are equivalent if there exists a finite index subgroup $H \subset H_1 \cap H_1'$ such that  $\phi |_H = \phi' |_{H}$.  The set of equivalence classes of commensurations forms the \textit{commensurator group} $\mathrm{Comm}(G)$ of $G$.

Witte Morris points out that for torsion free locally nilpotent groups, $\mathrm{Comm}(G)$ acts naturally on $\mathrm{LO}(G)$.   This follows from an application of Koberda's theorem (Theorem \ref{koberdas theorem}), and the fact that for every subgroup $H$ of a torsion-free locally nilpotent group $G$, the restriction map $r: \mathrm{LO}(G) \rightarrow \mathrm{LO}(H)$ is surjective.  When $G$ is a bi-orderable group, the restriction $r: \mathrm{LO}(G) \rightarrow \mathrm{LO}(H)$ is not a surjective map in general, so this generalization is not possible in our setting.

However, using the restriction map $r: \mathrm{LO}(G) \rightarrow \mathrm{LO}(H)$ for each finite index subgroup $H \subset G$, one can define the space of \textit{virtual} left-orderings of $G$ as the limit
\[ \mathrm{VLO}(G) = \varinjlim \mathrm{LO}(H),
\]
where the limit is over all finite-index subgroups $H$ of $G$ \cite{Morris12}.  When $P \in \mathrm{LO}(H)$ and $H$ is a finite index subgroup of $G$, we will denote the corresponding element of $\mathrm{VLO}(G)$ by $[P]$.  Then $\mathrm{Comm}(G)$ naturally acts on $\mathrm{VLO}(G)$: for each commensuration $\phi :H_1 \rightarrow H_2$ and each positive cone $P \in \mathrm{LO}(H)$, set $\phi([P]) = [\phi(P \cap H_1)]$.  It is straightforward to check that this definition respects the necessary equivalence relations.

\begin{lemma}
\label{VLOG action lemma}
Let $G$ be a left-orderable group and  $\phi :H_1 \rightarrow H_2$ a commensuration of $G$ where $H_1$ is abelian.  If $\phi = \tau_{p/q}$ for some $p/q \in \mathbb{Q}$ then the element of $\mathrm{Comm}(G)$ represented by $\phi$ acts trivially on $\mathrm{VLO}(G)$.
\end{lemma}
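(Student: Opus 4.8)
The plan is to show that $\phi$ fixes every class $[P] \in \mathrm{VLO}(G)$, where we may take $P \in \mathrm{LO}(H)$ for some finite-index subgroup $H$ of $G$. First I would reduce the problem from $H$ to the subgroups $H_1, H_2$. Setting $Q = P \cap H_1 \in \mathrm{LO}(H_1)$, the cones $P$ and $Q$ induce the same positive cone on the finite-index subgroup $H \cap H_1$, so $[P] = [Q]$ in $\mathrm{VLO}(G)$; and by the definition of the action, $\phi([P]) = [\phi(P \cap H_1)] = [\phi(Q)]$ with $\phi(Q) \in \mathrm{LO}(H_2)$. Since two classes in $\mathrm{VLO}(G)$ coincide precisely when their representatives restrict to the same cone on some common finite-index subgroup, it therefore suffices to prove that $\phi(Q)$ and $Q$ agree on $L = H_1 \cap H_2$. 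We take $p, q > 0$, which is the case furnished by Lemma \ref{Morris-lemma}; for $p/q < 0$ the map $\tau_{p/q}$ is orientation-reversing, so positivity of $p/q$ is needed for the statement to hold.

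The key step is a membership computation on $L$. Fix $x \in L$ with $x \neq 1$. Because $\phi = \tau_{p/q}$ is a homomorphism with $\phi(g^q) = g^p$ for all $g \in H_1$, the element $y = \phi^{-1}(x) \in H_1$ satisfies both $\phi(y^q) = \phi(y)^q = x^q$ and $\phi(y^q) = y^p$, whence $y^p = x^q$. Now $H_1$ is a torsion-free abelian group, as $G$ is left-orderable, so the positive cone $Q$ is a bi-ordering of $H_1$ in which the maps $z \mapsto z^n$ with $n > 0$ preserve the sign of elements relative to $Q$; thus $y \in Q \iff y^p \in Q$ and $x \in Q \iff x^q \in Q$. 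Chaining these equivalences together with $x \in \phi(Q) \iff \phi^{-1}(x) = y \in Q$ and the identity $y^p = x^q$ gives
\[ x \in \phi(Q) \iff y \in Q \iff y^p \in Q \iff x^q \in Q \iff x \in Q. \]
Hence $\phi(Q) \cap L = Q \cap L$, so $[\phi(Q)] = [Q] = [P]$; as $[P]$ was arbitrary, the element of $\mathrm{Comm}(G)$ represented by $\phi$ acts trivially on $\mathrm{VLO}(G)$.

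There is no serious technical obstacle here: the argument is elementary once the reduction is in place, and the only care required is the bookkeeping of domains — that $x^q$, $y = \phi^{-1}(x)$, and $y^p$ all lie in the appropriate subgroups, which holds because $x \in L \subseteq H_1 \cap H_2$ and $\phi^{-1} : H_2 \to H_1$. Conceptually, the content is identical to part (ii) of Theorem \ref{main theorem}: a positive rational scaling preserves positivity in a bi-ordered abelian group. The one genuinely new feature in the commensurated setting is that $\phi$ carries elements out of $H_1$ and into $H_2$, which is precisely why one must descend to the common finite-index subgroup $L = H_1 \cap H_2$ before the two cones can be compared.
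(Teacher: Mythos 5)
Your argument is essentially the paper's own proof with the omitted verification supplied: the paper's proof merely asserts that, using (\ref{autom}), the cones $P \cap H_1$ and $\phi(P \cap H_1)$ agree on a common finite-index subgroup, and your chain $x \in \phi(Q) \iff y \in Q \iff y^p \in Q \iff x^q \in Q \iff x \in Q$ (via $y^p = x^q$ and the fact that a left-ordering of a torsion-free abelian group is bi-invariant, so positive powers preserve sign) is exactly that verification; your remark that $p,q>0$ is required, and is what Lemma \ref{Morris-lemma} furnishes, is also correct. One bookkeeping correction, though: $Q = P \cap H_1$ is a positive cone of $H \cap H_1$, not of $H_1$, and $\phi(Q)$ is a positive cone of $\phi(H \cap H_1)$, not of $H_2$, so the comparison cannot literally take place on $L = H_1 \cap H_2$ --- for $x \in L \setminus H$ one has $x \notin Q$ automatically while $x \in \phi(Q)$ remains possible, so the asserted set equality $\phi(Q) \cap L = Q \cap L$ can fail. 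The fix is immediate and changes nothing in your element-wise computation: run it instead on the finite-index subgroup $(H \cap H_1) \cap \phi(H \cap H_1)$, which lies inside $H \cap H_1 \cap H_2$ (the subgroup the paper names) and on which both restrictions are genuinely positive cones; every $x \neq 1$ there satisfies your chain, giving $[Q] = [\phi(Q)] = \phi([P])$ as required.
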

\begin{proof}
Suppose that $H$ is a finite index subgroup and $P \subset H$ is the positive cone of a left-ordering.  Consider $P \cap H_1$ and $ \phi(P \cap H_1)$.   The first is the positive cone of a left-ordering of $H \cap H_1$, the second is the positive cone of a left-ordering of $H \cap H_2$.  Using the fact that $\phi$ satisfies (\ref{autom}), one can show that these orderings agree on the finite index subgroup $H \cap H_1 \cap H_2$ so that $[P]=[\phi(P \cap H_1)]$, and thus $\phi$ acts trivially on $\mathrm{VLO}(G)$.
\end{proof}

\begin{theorem}  Let $G$ be a bi-orderable group. 

\noindent (i) If $G$ is not virtually abelian then $\mathrm{Comm}(G)$ acts faithfully on $\mathrm{VLO}(G)$.

\noindent (ii) If $G$ is virtually abelian then the kernel of the action of $\mathrm{Comm}(G)$ on $\mathrm{VLO}(G)$ contains precisely the elements represented by commensurations $\tau_{p/q} : H_1 \rightarrow H_2$, if any such commensurations exist.

\end{theorem}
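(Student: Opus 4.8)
The plan is to mirror the proof of Theorem \ref{main theorem}, replacing $\mathrm{LO}(G)$ by $\mathrm{VLO}(G)$ and automorphisms by commensurations, with the key intermediate goal being a commensurator analog of Proposition \ref{main lemma}: \emph{if a commensuration $\phi : H_1 \to H_2$ acts trivially on $\mathrm{VLO}(G)$, then $\phi$ satisfies (\ref{autom})}. Once this is in hand, parts (i) and (ii) follow by feeding it into Lemmas \ref{no star phis} and \ref{Morris-lemma} exactly as in the proof of Theorem \ref{main theorem}.

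I would prove the reduction by contraposition. Suppose $\phi$ fails (\ref{autom}) at some $g \in H_1$; I will exhibit a positive cone $P \in \mathrm{LO}(H_1)$ with $[\phi(P)] \neq [P]$. Recalling that two cones $P_1 \in \mathrm{LO}(H_1)$ and $P_2 \in \mathrm{LO}(H_2)$ satisfy $[P_1] = [P_2]$ if and only if they restrict to the same ordering on $H_1 \cap H_2$ (a single element on which they disagree propagates to every finite-index subgroup by passing to powers), it suffices to produce a disagreement on $H_1 \cap H_2$. Failure of (\ref{autom}) at $g$ means either (Case A) $\phi(g)^n = g^{-m}$ for some $n,m>0$, or (Case B) $\phi(g) \notin I(g)$. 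In Case A, pick $P \in \mathrm{LO}(H_1)$ with $g \in P$ and choose $s>0$ with $g^{ms} \in H_1 \cap H_2$; then $g^{ms}$ is positive for $P$, while $\phi(g) \in \phi(P)$ forces $g^{-ms}=\phi(g)^{ns} \in \phi(P)$, so $g^{ms}$ is negative for $\phi(P)$. In Case B, take $P$ to be the restriction to $H_1$ of a left-ordering of $G$ in which $I(g)$ is convex (Lemma \ref{new order}); then $I(g)\cap H_1$ is convex for $P$, and $\phi(I(g)\cap H_1) = I(\phi(g)) \cap H_2$ is convex for $\phi(P)$. If $[P]=[\phi(P)]$, these two subgroups would both be convex in one common ordering on some finite-index $H \subset H_1 \cap H_2$, hence comparable under inclusion; but both are nontrivial and, by Lemma \ref{Ig lemma}, intersect trivially, a contradiction.

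With the reduction established, part (i) is immediate: if $G$ is not virtually abelian and a nontrivial $[\phi]\in\mathrm{Comm}(G)$ acts trivially, then $\phi$ satisfies (\ref{autom}) and $\phi \neq \mathrm{id}$, so Lemma \ref{no star phis} forces $H_1$ to be abelian; as $H_1$ has finite index, this contradicts $G$ not being virtually abelian. For part (ii), Lemma \ref{VLOG action lemma} already shows that each $\tau_{p/q}$ lies in the kernel. Conversely, if $[\phi]$ acts trivially, the reduction gives (\ref{autom}); choosing a finite-index torsion-free abelian subgroup $A \subset G$ and restricting $\phi$ to the finite-index torsion-free abelian subgroup $H_1 \cap A \cap \phi^{-1}(A)$, Lemma \ref{Morris-lemma} yields $\phi = \tau_{p/q}$ there, so $[\phi] = [\tau_{p/q}]$.

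The main obstacle is the reduction step itself. Proposition \ref{main lemma} was stated for honest automorphisms and cones defined on all of $G$, so transplanting its convexity argument into $\mathrm{VLO}(G)$ takes care in two places: first, identifying $I_{H_1}(g)$ with $I(g)\cap H_1$ and checking that convexity survives restriction to a finite-index subgroup; and second, phrasing equality in the direct limit $\varinjlim \mathrm{LO}(H)$ correctly, so that a disagreement witnessed on $H_1\cap H_2$ (produced via suitable powers of the relevant elements) genuinely certifies $[\phi(P)] \neq [P]$. The remaining deductions are routine bookkeeping once the earlier lemmas are invoked.
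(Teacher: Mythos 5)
Your proposal is correct and follows essentially the same route as the paper: reduce to the dichotomy of Proposition \ref{main lemma} (which the paper invokes as ``arguments identical to those in the proof''), then apply Lemma \ref{no star phis} for part (i) and Lemmas \ref{Morris-lemma} and \ref{VLOG action lemma} for part (ii). Your only additions are welcome bookkeeping the paper leaves implicit --- verifying that a disagreement at $g^m \in H_1 \cap H_2$, or the incompatible convex subgroups $I(g)\cap H$ and $I(\phi(g))\cap H$, genuinely certify inequality in the direct limit $\varinjlim \mathrm{LO}(H)$ by propagating to powers in every finite-index subgroup.
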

\begin{proof}
First suppose that $G$ is not virtually abelian, and let $\phi: H_1 \rightarrow H_2$ be a nontrivial commensuration of $G$.  By Lemma \ref{no star phis}, $\phi$ cannot satisfy (\ref{autom}) since $H_1$ is not abelian.  Thus there exists $g \in H_1$ such that $\phi(g) \neq g$ and either $\phi(g)^n = g^{-m}$ for some $m,n >0$ or $I(g) \neq I(\phi(g))$.  In either case we can construct a left-ordering of $G$ with positive cone $P$ satisfying $g \in P$ and $\phi(g) \notin P$ using arguments identical to those in the proof of Lemma \ref{main lemma}.  Then $[P] \neq [\phi(P \cap H_1)]$, so (the class of) $\phi: H_1 \rightarrow H_2$ acts nontrivially on $\mathrm{VLO}(G)$.

On the other hand, suppose $G$ is virtually abelian, and let $\phi: H_1 \rightarrow H_2$ be a nontrivial commensuration of $G$.  If $\phi$ does not satisfy (\ref{autom}), then an argument identical to the previous paragraph shows that the class of $\phi$ acts nontrivially on $\mathrm{VLO}(G)$.  On the other hand, if $\phi$ does satisfy (\ref{autom}), then $\phi = \tau_{p/q}$ for some $p/q \in \mathbb{Q}$ by Lemma \ref{Morris-lemma}.  In this case, $\phi = \tau_{p/q}$ acts trivially on $\mathrm{VLO}(G)$ by Lemma \ref{VLOG action lemma}.
\end{proof}

\bibliographystyle{plain}

\bibliography{ordbook}

\end{document}